\numberwithin{equation}{section} 
\newcommand\numberthis{\stepcounter{equation}\tag{\theequation}} 
\newcounter{constantno}
\newcommand{\constantnumber}[1]{\refstepcounter{constantno}\label{#1}}
\newtheorem{lemma}{Lemma}[section]
\newtheorem{theorem}[lemma]{Theorem}
\theoremstyle{definition}
\newtheorem{definition}[lemma]{Definition}
\theoremstyle{remark}
\newtheorem{remark}[lemma]{Remark}
\newcommand{\ba}{\bar{\alpha}}  
\newcommand{\bb}{\bar{\beta}}
\newcommand{\bm}{\bar{\mu}}
\newcommand{\la}{\langle}
\newcommand{\ra}{\rangle}
\begin{document}

\title{Pseudo-Harmonic Maps From Complete Noncompact Pseudo-Hermitian Manifolds To Regular Balls\footnotetext{\textbf{Keywords}: Sub-Laplacian Comparison Theorem, Regular Ball, Pseudo-Harmonic Maps, Horizontal Gradient Estimate, Liouville Theorem, Existence Theorem}\footnotetext{\textbf{MSC 2010}: 58E20, 53C25, 32V05}}

\author{Tian Chong \and Yuxin Dong\footnote{Supported by NSFC grant No. 11771087 and LMNS, Fudan.} \and Yibin Ren\footnote{Corresponding author. Supported by NSFC grant No. 11801517} \and Wei Zhang}

\date{}

\maketitle

\begin{abstract}
In this paper, we give an estimate of sub-Laplacian of Riemannian distance functions in pseudo-Hermitian geometry which plays a similar role as Laplacian comparison theorem in Riemannian geometry, and deduce a prior horizontal gradient estimate of pseudo-harmonic maps from pseudo-Hermitian manifolds to regular balls of Riemannian manifolds. 
As an application, Liouville theorem is established under the conditions of nonnegative pseudo-Hermitian Ricci curvature and vanishing pseudo-Hermitian torsion. 
Moreover, we obtain the existence of pseudo-harmonic maps from complete noncompact pseudo-Hermitian manifolds to regular balls of Riemannian manifolds.
\end{abstract}

\section{Introduction}
Inspired by Eells-Sampson's theorem, one natural problem is to consider the existence of harmonic maps from complete noncompact Riemannian manifolds. 
Usually some convexity conditions on the images will lead this existence (cf. \cite{ding1991harmonic,li1998heat,Li1991heat}).
Based on elliptic theory, some existence theorems have been studied for generalized harmonic maps (cf. \cite{chen2012exist,Ni1999hermitian}).

The pseudo-harmonic map is an analogue of the harmonic map in pseudo-Hermitian geometry. 
Let $(M, \theta)$ be a pseudo-Hermitian manifold of real dimension $2m+1$ and $(N, h)$ be a Riemannian manifold. The horizontal energy of a smooth map $f : M \to N$ is defined by
\begin{align}
E_H (f) = \int_M |d_b f|^2 \theta \wedge (d \theta)^m
\end{align}
where $d_b f$ is the horizontal part of $df$. 
The pseudo-harmonic map is a critical point of $E_H$.
Hence it locally satisfies the following Euler-Lagrange equation
\begin{align}
\tau_H^i (f) \overset{\Delta}{=} \Delta_b f^i + \sum_{j, k} \Gamma^i_{j k} (f) \langle d_b f^j, d_b f^k \rangle =0, \label{a-local}
\end{align}
where $\Gamma^i_{jk}$'s are Christoffel symbols of Levi-Civita connection in $(N, h)$.
Here $\Delta_b$ denotes the sub-Laplacian which is a subelliptic operator enjoying nice regularity as elliptic operators.
By heat flow method, the Eells-Sampson's type theorem also holds for pseudo-harmonic maps (cf. \cite{chang2013existence,ren2018pseudo}).
The Dirichlet problem of pseudo-harmonic maps to regular balls has also been solved by Jost-Xu (cf. \cite{jost1998subelliptic}).

This paper studies pseudo-harmonic maps from complete noncompact pseudo-Hermitian manifolds to regular balls.
In order to establish some local estimates, we need sub-Laplacian comparison theorem in pseudo-Hermitian manifolds.
Actually such kinds of theorems have been investigated for Sasakian manifolds in \cite{Agrachev2015bishop,baudoin2017comparison,chang2018gradient,lee2013bishop}.
However, up to now, there is no satisfactory comparison theorem for a pseudo-Hermitian manifold, which is not Sasakian.
For our purpose, we will give a new sub-Laplacian comparison theorem for a pseudo-Hermitian manifold. Note that the Riemannian distance associated with Webster metric has better regularity than the Carnot-Carath\'eodory distance, and its variational theory is well studied in Riemannian geometry.
By the index comparison theorem in Riemannian geometry, we can derive the following estimate of sub-Laplacian of Riemannian distance on pseudo-Hermitian manifolds. 
Let $B_R (x_0)$ be the Riemannian geodesic ball of radius $R$ centered at $x_0 \in M$.
\begin{theorem} \label{c-thm-sub}
Suppose $(M^{2m +1}, \theta)$ is a complete pseudo-Hermitian manifold. If for some $k, k_1 \geq 0$,
\begin{align*}
R_* \geq - k, \mbox{ and } |A|, | \mbox{div} A | \leq k_1 , \quad \mbox{ on } B_R (x_0),
\end{align*}
where $R_*$ is the pseudo-Hermitian Ricci curvature and $A$ is the pseudo-Hermitian torsion, \constantnumber{cst-2} 
then there exists $ C_{\ref*{cst-2}} = C_{\ref*{cst-2}} (m)$ such that
\begin{align}
\Delta_b r \leq C_{\ref*{cst-2}} \left(\frac{1}{r} + \sqrt{1 + k + k_1 + k_1^2}  \right), \quad \mbox{ on } B_R (x_0) \setminus Cut(x_0),
\end{align}
where $r$ is the Riemannian distance from $x_0$ and $Cut (x_0)$ is the cut locus of $x_0$.
\end{theorem}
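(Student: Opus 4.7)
The plan is to translate the sub-Laplacian of $r$ into Riemannian Hessian quantities for the Webster metric $g_\theta$ and then apply the classical Riemannian index comparison theorem along a minimizing Riemannian geodesic. Fix $x\in B_R(x_0)\setminus Cut(x_0)$, let $\gamma:[0,r]\to M$ be the unique unit-speed minimizing $g_\theta$-geodesic from $x_0$ to $x$, fix a horizontal orthonormal frame $\{e_1,\ldots,e_{2m}\}$ at $x$, and let $P_a(t)$, $P_T(t)$ denote the Levi-Civita parallel transports of $e_a$, $T_x$ along $\gamma$. Using the standard formula relating the Tanaka--Webster connection $\nabla^{TW}$ to the Levi-Civita connection $\nabla^g$ of $g_\theta$ (as in Dragomir--Tomassini), one may write
\[
\Delta_b r = \mbox{tr}_H\, \mbox{Hess}^g(r) + \la V_A, \nabla r\ra,
\]
where $V_A=-\sum_a(\nabla^{TW}_{e_a}e_a-\nabla^g_{e_a}e_a)$ is an algebraic expression in $J$, $T$, $A$ with $|V_A|\leq C(m)(1+|A|)\leq C(m)(1+k_1)$. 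Since $|\nabla r|=1$, it suffices to bound $\mbox{tr}_H\,\mbox{Hess}^g(r)$ from above.

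Next I would apply the Riemannian index form. Set $e_a^\perp:=e_a-\la e_a,\dot\gamma(r)\ra\dot\gamma(r)$ and $P_a^\perp(t):=P_a(t)-\la e_a,\dot\gamma(r)\ra\dot\gamma(t)$. Since $\mbox{Hess}^g(r)(\dot\gamma,\cdot)=0$, one has $\mbox{Hess}^g(r)(e_a,e_a)=I(J_a,J_a)$ for the Jacobi field $J_a$ with $J_a(0)=0$, $J_a(r)=e_a^\perp$. With test fields $V_a(t)=\phi(t)P_a^\perp(t)$ satisfying $\phi(0)=0$, $\phi(r)=1$, $\phi\geq 0$, the index inequality summed over $a=1,\ldots,2m$ yields
\[
\mbox{tr}_H\,\mbox{Hess}^g(r)\leq 2m\int_0^r(\phi')^2\,dt-\int_0^r\phi^2 \sum_{a=1}^{2m}\la R^g(P_a^\perp,\dot\gamma)\dot\gamma,P_a^\perp\ra\,dt.
\]
Since $\{P_1(t),\ldots,P_{2m}(t),P_T(t)\}$ is a $g_\theta$-orthonormal frame along $\gamma$, a direct manipulation shows that the curvature sum equals $\mbox{Ric}^g(\dot\gamma,\dot\gamma)-\la R^g(P_T,\dot\gamma)\dot\gamma,P_T\ra$.

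The remaining task is to bound this combination from below in terms of $k$ and $k_1$. Using the standard identities expressing the Levi-Civita curvature of $g_\theta$ via the pseudo-Hermitian curvature tensor together with covariant derivatives of $A$, the hypotheses $R_*\geq-k$ and $|A|,|\mbox{div}\,A|\leq k_1$ should yield
\[
\mbox{Ric}^g(\dot\gamma,\dot\gamma)-\la R^g(P_T,\dot\gamma)\dot\gamma,P_T\ra\geq-C(m)(k+k_1+k_1^2).
\]
Setting $\Lambda:=\sqrt{1+k+k_1+k_1^2}$ and taking $\phi(t)=\sinh(\Lambda t)/\sinh(\Lambda r)$, a standard computation gives $\mbox{tr}_H\,\mbox{Hess}^g(r)\leq C(m)(1/r+\Lambda)$, and combining with the first display completes the proof.

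The main obstacle is precisely this last curvature estimate. The divergence $\mbox{div}\,A$ enters naturally because the difference between the Riemannian and pseudo-Hermitian Ricci tensors involves a trace of $\nabla A$; this explains why the hypothesis $|\mbox{div}\,A|\leq k_1$---rather than the stronger $|\nabla A|\leq k_1$---is sufficient. Carefully identifying which components of $\nabla A$ actually appear (after the anti-symmetric pieces cancel by Bianchi-type symmetries) is the bookkeeping step that makes the whole argument go through.
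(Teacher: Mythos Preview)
Your overall strategy---express $\Delta_b r$ as a horizontal trace of the Riemannian Hessian, then apply the index comparison along a $g_\theta$-geodesic with test fields $\phi(t)\cdot(\text{frame})$---is exactly the paper's route. The difference that breaks your argument is the choice of frame along $\gamma$.

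You parallel-transport $\{e_a,\xi\}$ with the \emph{Levi--Civita} connection. Since $\hat\nabla_{\dot\gamma}\xi=J\pi_H\dot\gamma\neq 0$ in general, the transported frame $\{P_a\}$ does \emph{not} stay horizontal, so the curvature integrand $\sum_a\langle R^g(P_a,\dot\gamma)\dot\gamma,P_a\rangle$ is a trace over a $2m$-plane tilted away from $HM$. Decomposing $P_a=H_a+c_a\xi$ one picks up terms like $c_a^2\langle\hat R(\xi,\dot\gamma)\dot\gamma,\xi\rangle$ and $c_a\langle\hat R(H_a,\dot\gamma)\dot\gamma,\xi\rangle$; via the relation \eqref{c-2} these contain $\langle(\nabla_\xi\tau)\dot\gamma,\dot\gamma\rangle$ and untraced pieces $\langle(\nabla_Y\tau)X,X\rangle$, which are controlled by $|\nabla A|$ but \emph{not} by $|\operatorname{div}A|$. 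So the inequality you need, $\operatorname{Ric}^g(\dot\gamma,\dot\gamma)-\langle R^g(P_T,\dot\gamma)\dot\gamma,P_T\rangle\geq -C(m)(k+k_1+k_1^2)$, is not available under the stated hypotheses.

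The paper fixes this by transporting the frame with the \emph{Tanaka--Webster} connection, which preserves $HM$. The curvature integrand is then $\sum_B\langle\hat R(e_B,\dot\gamma)\dot\gamma,e_B\rangle$ over a genuinely horizontal frame, and after summing the $S$-contribution collapses to $\operatorname{div}\tau(\dot\gamma)$ (this is \eqref{c-10}); that is precisely why $|\operatorname{div}A|$ suffices. The price is that the TW-parallel frame is not LC-parallel, so $\hat\nabla_{\dot\gamma}e_B\neq 0$; but this derivative is algebraic in $A$ and $J$ and hence bounded by $C(m)(1+k_1+k_1^2)$, which is absorbed into the choice of $\kappa$ in $s_\kappa$. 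Incidentally, your correction term $V_A$ is actually zero (the individual $A(e_B,e_B)$ terms trace to zero), so $\Delta_b r=\operatorname{tr}_H\widehat{\operatorname{Hess}}(r)$ on the nose.
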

The proof will be given in Section \ref{sec-comparison}.
Based on this sub-Laplacian comparison theorem, we will establish the following local prior horizontal gradient estimate of pseudo-harmonic maps by maximum principle.

\begin{theorem} \label{d-thm-estimate}
Suppose that $(M^{2m+1}, \theta)$ is a noncompact complete pseudo-Hermitian manifold and $(N, h)$ is a Riemannian manifold with sectional curvature $K^N \leq \kappa$ for some $\kappa \geq 0$. On $B_{2 R} (x_0) \subset M$ with $R >1$, 
\begin{align}
R_* \geq - k \quad \mbox{and} \quad |A|, | \mbox{div} A| \leq k_1,
\end{align}
for some $k, k_1 \geq 0$.
Assume that $f : B_{2 R} (x_0) \subset M \to B_D (p_0) \subset N $ is pseudo-harmonic where $B_D (p_0)$ is a regular ball in $N$. Then the horizontal energy density $|d_b f|^2$ on $B_R (x_0)$ is uniformly bounded. 
More precisely, 
\constantnumber{cst-reebcrbochner} \constantnumber{cst-3} 
\begin{align}
\max_{B_R (x_0)} |d_b f|^2 \leq C_{\ref*{cst-3}} \left[ C_{\ref*{cst-reebcrbochner}} + \frac{C_{\ref*{cst-reebcrbochner}}}{C_{\ref*{cst-reebcrbochner}} + R^{-1}} + \frac{1}{R} \right]
\end{align}
where $C_{\ref*{cst-reebcrbochner}}$ is given in Lemma \ref{b-lem-estimate} which depends on $k, k_1$ and $C_{\ref*{cst-3}}$ depends on $k, k_1, \kappa, D$.
In particular, if $k = 0$ and $k_1 = 0$, then $C_{\ref*{cst-reebcrbochner}} = 0$.
\end{theorem}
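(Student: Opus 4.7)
The plan is to run a Cheng--Yau style maximum principle argument on an auxiliary function built from the horizontal energy density $|d_b f|^2$, a cutoff based on the Riemannian distance in $M$, and a convex function on the target regular ball. Let $\rho$ be the Riemannian distance from $p_0$ in $N$. Because $B_D(p_0)$ is a regular ball, the function $\psi := \cos(\sqrt{\kappa}\,\rho)$ (replaced by $-\rho^2$ when $\kappa = 0$) is smooth and strictly concave on $B_D(p_0)$, with $\mathrm{Hess}\,\psi \le -c_0\, h$ for some $c_0 = c_0(\kappa, D) > 0$, and is bounded between two positive constants there. Since $f$ is pseudo-harmonic, the composition formula for $\Delta_b$ combined with (\ref{a-local}) gives $\Delta_b(\psi\circ f) \le -c_0\,|d_b f|^2$, so $\phi := \psi\circ f$ will serve as a positive barrier that detects the horizontal energy from above.

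Next, fix a standard cutoff $\chi \in C^\infty([0,\infty))$ with $\chi \equiv 1$ on $[0,1]$, $\chi \equiv 0$ on $[2,\infty)$, $-C\sqrt{\chi} \le \chi' \le 0$ and $|\chi''| \le C$, and let $\eta(x) := \chi(r(x)/R)$ where $r$ is the Riemannian distance from $x_0$. On $B_{2R}(x_0)\setminus \mathrm{Cut}(x_0)$, Theorem \ref{c-thm-sub} together with $R > 1$ and $r \ge R$ on the support of $\chi'$ yields
\begin{align*}
\Delta_b \eta \ge -\frac{C}{R^2} - \frac{C}{R}\left(\frac{1}{r} + \sqrt{1 + k + k_1 + k_1^2}\right) \ge -\frac{C'}{R}\sqrt{1 + k + k_1 + k_1^2},
\end{align*}
while $|\nabla_b\eta|^2 \le C R^{-2}\eta$. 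Form the test function $F := \eta\,|d_b f|^2 / \phi$ and let $x^\ast$ be a maximum point of $F$ on $\overline{B_{2R}(x_0)}$. By Calabi's trick we may assume $x^\ast \notin \mathrm{Cut}(x_0)$, so $r$ is smooth there and $\nabla_b F(x^\ast) = 0$, $\Delta_b F(x^\ast) \le 0$.

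The core step is to expand $\Delta_b F$ at $x^\ast$. The pseudo-harmonic Bochner-type inequality of Lemma \ref{b-lem-estimate} furnishes a lower bound of the form
\begin{align*}
\Delta_b |d_b f|^2 \ge -2\,C_{\ref*{cst-reebcrbochner}}\,|d_b f|^2 - 2\kappa\,|d_b f|^4 + 2|\nabla_b d_b f|^2 + (\text{Reeb-torsion corrections}),
\end{align*}
where the correction terms involve $T(f)$ and $A$. Combining this with $\Delta_b\phi \le -c_0|d_b f|^2$, the product/quotient expansion for $\Delta_b(\eta |d_b f|^2 / \phi)$, and the identity $|d_b f|^2\,\nabla_b\!\log(\eta/\phi) = -\nabla_b |d_b f|^2$ at $x^\ast$ coming from $\nabla_b F(x^\ast) = 0$, one arrives at a quadratic inequality in $u := |d_b f|^2(x^\ast)$ of the shape $\alpha\,u^2 \le \beta\,u + \gamma$, whose coefficients depend only on $R$, $k$, $k_1$, $\kappa$, $D$ and $C_{\ref*{cst-reebcrbochner}}$. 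Solving this inequality and multiplying by the supremum of $\phi$ on $B_D(p_0)$ yields the stated bound on $\max_{B_R(x_0)}|d_b f|^2$; in the Ricci-flat, torsion-free case the Bochner constant $C_{\ref*{cst-reebcrbochner}}$ vanishes by construction.

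The main obstacle is the genuinely new Reeb-direction and torsion terms that the Bochner identity produces beyond the Riemannian case: they are not automatically dominated by $|d_b f|^2$, and must be controlled by the hypotheses $|A|,|\mathrm{div}\,A| \le k_1$, Cauchy--Schwarz, and careful absorption into the $|\nabla_b d_b f|^2$ good term. Precisely this absorption is what causes $C_{\ref*{cst-reebcrbochner}}$ to depend on $k$ and $k_1$ in the stated form, and what forces the $\sqrt{1 + k + k_1 + k_1^2}$ factor coming from $\Delta_b r$ to be compatible with the analogous factor from the Bochner remainder when the two are combined inside the quadratic inequality for $u$.
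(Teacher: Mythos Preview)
Your overall strategy—maximum principle applied to a cutoff times the energy density divided by a convex barrier built from $\rho$—matches the paper's, but there is a genuine gap at the point you yourself flag as ``the main obstacle.'' The CR Bochner formula for $|d_b f|^2$ (Lemma \ref{b-lemma-bochner}) contains the cross term $4i(f^i_{\bar\alpha}f^i_{0\alpha}-f^i_{\alpha}f^i_{0\bar\alpha}) = -4\langle \nabla_b f_0, d_b f\circ J\rangle$. After Cauchy--Schwarz this produces a \emph{negative} $-\epsilon_1|\nabla_b f_0|^2$ on the right side (see \eqref{b-bochner1}). The tensor $\nabla_b f_0$ has components $f^i_{0\alpha}$, which are mixed Reeb--horizontal second derivatives; they are \emph{not} components of $\nabla_b d_b f$ and cannot be bounded by $|\nabla_b d_b f|^2$. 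The commutation relation \eqref{b-commutation-1} lets you recover $|f_0|^2$ from $|\nabla_b d_b f|^2$, but recovering $|\nabla_b f_0|^2$ would require third horizontal derivatives, which the Bochner formula does not supply. So your proposed ``absorption into the $|\nabla_b d_b f|^2$ good term'' does not close.

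The paper resolves this by a device your auxiliary function $F=\eta|d_b f|^2/\phi$ is missing: it augments the energy density to $\Phi_{\mu\chi}=|d_b f|^2+\mu\chi|f_0|^2$ and invokes the \emph{second} Bochner formula \eqref{b-bochner2} for $|f_0|^2$, whose leading term $+2|\nabla_b f_0|^2$ exactly cancels the bad $-\epsilon_1|\nabla_b f_0|^2$ once one sets $\epsilon_1=\epsilon\mu\chi$ (Lemma \ref{d-bochner-phi}). The maximum principle is then run on $\chi\Phi_{\mu\chi}/(b-\psi\circ f)^{\nu}$ with a carefully chosen exponent $\nu\in[1,2)$ (Lemma \ref{d-lemma-nu-b}); this exponent is what makes the term $\nu\,\Delta_b(\psi\circ f)/(b-\psi\circ f)-2\kappa|d_b f|^2$ strictly positive (Lemma \ref{d-lemma-comparison}) and simultaneously allows the gradient cross term \eqref{d-3} to be absorbed. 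Your choice of a first power in $F$ would not in general achieve both. The final quadratic inequality in the paper is in $\chi\Phi_{\mu\chi}$, not in $|d_b f|^2$ alone, and the parameter $\mu$ is tuned at the end so that the $|f_0|^2$ coefficient in \eqref{d-4} is nonnegative.
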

The proof will be given in Section \ref{sec-gradient}.
A direct application is the following Liouville theorem for pseudo-harmonic maps which is a generalization of the one for harmonic maps by Choi \cite{choi1982liouville}.
\begin{theorem} \label{d-thm-liouville}
Suppose that $(M, \theta)$ is a noncompact complete Sasakian manifold with nonnegative pseudo-Hermitian Ricci curvature and $(N, h)$ is a Riemannian manifold with sectional curvature bounded above. Then there is no nontrivial pseudo-Hermitian map from $M$ to any regular ball of $N$. 
\end{theorem}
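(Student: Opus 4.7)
The strategy is to apply the horizontal gradient estimate of Theorem~\ref{d-thm-estimate} on geodesic balls of radius $R\to\infty$, and then pass from the resulting vanishing of $d_b f$ to the constancy of $f$ by the bracket-generating property of the horizontal distribution.

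Let $f : M \to B_D(p_0) \subset N$ be a pseudo-harmonic map into a regular ball of $N$. First I would verify the hypotheses of Theorem~\ref{d-thm-estimate}. Since $(M,\theta)$ is Sasakian, the pseudo-Hermitian torsion $A$ vanishes identically, so we may take $k_1 = 0$; the assumption $R_* \geq 0$ allows $k = 0$; the upper bound on $K^N$ supplies the constant $\kappa$; and $B_D(p_0)$ is regular by hypothesis. For any fixed $x_0 \in M$ and any $R > 1$, the restriction $f|_{B_{2R}(x_0)}$ is still a pseudo-harmonic map into $B_D(p_0)$, so Theorem~\ref{d-thm-estimate} applies on each such ball.

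The decisive point is the final assertion of Theorem~\ref{d-thm-estimate}: with $k = k_1 = 0$, the constant $C_{\ref*{cst-reebcrbochner}}$ vanishes, so the estimate collapses to
\begin{align*}
\max_{B_R(x_0)} |d_b f|^2 \; \leq \; \frac{C_{\ref*{cst-3}}}{R},
\end{align*}
where $C_{\ref*{cst-3}}$ depends only on $\kappa$ and $D$ and is therefore independent of $R$. Letting $R \to \infty$, and using that $x_0 \in M$ was arbitrary, yields $|d_b f| \equiv 0$ on $M$; equivalently, $df$ vanishes on the horizontal distribution $HM$.

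The remaining and main obstacle is to upgrade $d_b f = 0$ to the full constancy of $f$, since a priori $df$ could still be nonzero in the Reeb direction. Here I would invoke the contact nature of pseudo-Hermitian geometry: because the Levi form is nondegenerate, the horizontal distribution satisfies H\"ormander's bracket-generating condition (the Reeb direction already appears in $[HM,HM]$ via $d\theta$). By Chow's connectivity theorem, any two points of the connected manifold $M$ can be joined by a piecewise horizontal curve, and $f$ is constant along every such curve because $d_b f = 0$. Hence $f$ is a constant map, which is exactly the claim that no nontrivial pseudo-harmonic map $M \to B_D(p_0)$ exists.
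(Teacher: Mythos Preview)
Your proposal is correct and matches the paper's intended argument: the paper presents Theorem~\ref{d-thm-liouville} simply as ``a direct application'' of Theorem~\ref{d-thm-estimate} without spelling out a separate proof, and your reading of the estimate with $k=k_1=0$ (hence $C_{\ref*{cst-reebcrbochner}}=0$, leaving only the $C_{\ref*{cst-3}}/R$ term) is exactly the mechanism. One small remark: for the passage from $d_b f\equiv 0$ to constancy you invoke Chow's theorem, which is perfectly valid; an alternative already implicit in the paper is the commutation relation~\eqref{b-commutation-1}--\eqref{b-commutation}, which gives $2m|f_0|^2\le|\pi_{(1,1)}\nabla_b d_b f|^2$, so $d_b f\equiv 0$ forces $f_0\equiv 0$ directly and hence $df\equiv 0$.
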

Another application of Theorem \ref{d-thm-estimate} is the global existence of pseudo-harmonic maps from complete noncompact pseudo-Hermitian manifolds to regular balls which is due to an exhaustion process combined with the Dirichlet existence of pseudo-harmonic maps.
\begin{theorem} \label{e-thm-exist}
Suppose that $(M, \theta)$ is a complete noncompact pseudo-Hermitian manifold and $(N, h)$ is a Riemannian manifold with sectional curvature bounded from above. Then there is a pseudo-harmonic map from $M$ to any regular ball $B_D (p_0)$ of $N$.
\end{theorem}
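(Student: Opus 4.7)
The plan is to construct the required pseudo-harmonic map via an exhaustion argument combined with the local horizontal gradient estimate of Theorem \ref{d-thm-estimate} and the Dirichlet solvability of Jost--Xu \cite{jost1998subelliptic}. First, I fix a base point $x_0 \in M$ and an exhaustion of $M$ by relatively compact open sets $\Omega_i \subset \Omega_{i+1}$ with smooth boundary, $\bigcup_i \Omega_i = M$; one may take $\Omega_i = B_{R_i}(x_0)$ for a sequence $R_i \to \infty$ of regular values of the Riemannian distance $r(\cdot, x_0)$ (using Sard's theorem, possibly after a small smoothing), so that each $\partial \Omega_i$ is smooth. Pick any smooth auxiliary map $\psi : M \to B_D(p_0)$ to serve as boundary data. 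By Jost--Xu, on each $\Omega_i$ there exists a pseudo-harmonic map $f_i : \Omega_i \to B_D(p_0)$ with $f_i|_{\partial \Omega_i} = \psi|_{\partial \Omega_i}$.

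Next, I apply Theorem \ref{d-thm-estimate} to obtain uniform interior bounds. Fix any compact $K \subset M$ and choose $R > 1$ with $K \subset B_R(x_0)$. For all $i$ sufficiently large that $B_{2R}(x_0) \subset \Omega_i$, Theorem \ref{d-thm-estimate} bounds $\max_{B_R(x_0)} |d_b f_i|^2$ by a constant depending only on $R$, $\kappa$, $D$ and the geometric quantities $k, k_1$ on $B_{2R}(x_0)$, all of which are fixed independently of $i$. Combined with the pointwise bound coming from $f_i(\Omega_i) \subset B_D(p_0)$, this gives uniform $C^1$-control of the family $\{f_i\}$ on $K$. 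Standard interior subelliptic Schauder estimates applied to the quasilinear system \eqref{a-local} then bootstrap this to uniform $C^\ell$-bounds on $K$ for every $\ell$.

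A diagonal Arzel\`a--Ascoli extraction now produces a subsequence $f_{i_j}$ converging in $C^\ell_{\mathrm{loc}}(M)$ for every $\ell$ to a smooth map $f : M \to \overline{B_D(p_0)}$. Passing to the limit in \eqref{a-local}, the map $f$ satisfies the pseudo-harmonic equation on all of $M$. To verify that the image of $f$ actually lies inside the open regular ball, I would compose $f$ with the strictly convex function of Riemannian distance to $p_0$ that defines the notion of regular ball: that composition satisfies a subelliptic differential inequality of the form $\Delta_b (\varphi \circ f) \geq 0$, and the strong maximum principle for the sub-Laplacian forbids it from attaining its supremum at an interior point unless it is locally constant, which rules out interior contact with $\partial B_D(p_0)$.

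The main technical obstacle is to secure subelliptic Schauder bounds on each fixed compact set that are uniform in $i$; this reduces to an inspection of the quasilinear structure of \eqref{a-local}, whose coefficients depend only on the local geometry of $M$ and on the Christoffel symbols of $N$ along the image $f_i(K) \subset \overline{B_D(p_0)}$, all of which are controlled by the hypotheses independently of $i$. The auxiliary argument ensuring that $f(M) \subset B_D(p_0)$ rather than $\overline{B_D(p_0)}$ is then standard, and completes the construction of the desired pseudo-harmonic map.
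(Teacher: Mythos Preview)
Your proposal is correct and follows essentially the same strategy as the paper's proof: exhaust $M$ by bounded domains, solve the Dirichlet problem on each via Jost--Xu (the paper's Theorem~\ref{e-thm-dirichlet}), apply Theorem~\ref{d-thm-estimate} to obtain uniform interior horizontal-gradient bounds, and extract a limit by Arzel\`a--Ascoli together with a diagonal argument. The only differences are cosmetic: the paper passes to a $C^0$ limit and then invokes the regularity result Theorem~\ref{e-thm-smooth} on the resulting weak solution rather than bootstrapping to $C^\ell$ before the limit as you do, and the paper omits the explicit maximum-principle step you include to ensure the image stays inside the open ball $B_D(p_0)$; note also that Theorem~\ref{d-thm-estimate} controls only $|d_b f|^2$, so your phrase ``uniform $C^1$-control'' should be read as horizontal control, which is nonetheless enough both for equicontinuity and for the subelliptic bootstrap.
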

The proof will be given in Section \ref{sec-existence}.
One may doubt whether the pseudo-harmonic map given by Theorem \ref{e-thm-exist} is trivial. We will show an example whose domain is Sasakian with negative pseudo-Hermitian Ricci curvature.

\section{Basic Notions}

In this section, we present some basic notions of pseudo-Hermitian geometry and pseudo-harmonic maps. 
For details, readers may refer to \cite{dragomir2006differential,tanaka1975differential,webster1978pseudo}. Recall that a smooth manifold $M$ of real dimension $2m+1$ is said to be a CR manifold if
there exists a smooth rank $n$ complex subbundle $T_{1,0} M \subset TM \otimes \mathbb{C}$ such that
\begin{gather}
T_{1,0} M \cap T_{0,1} M = \{0\} \\
[\Gamma (T_{1,0} M), \Gamma (T_{1,0} M)] \subset \Gamma (T_{1,0} M) \label{b-integrable}
\end{gather}
where $T_{0,1} M = \overline{T_{1,0} M}$ is the complex conjugate of $T_{1,0} M$.
Equivalently, the CR structure may also be described by the real subbundle $HM = Re \: \{ T_{1,0}M \oplus T_{0,1}M \}$ of $TM$ which carries an almost complex structure $J : HM \rightarrow HM$ defined by $J (X+\overline{X})= i (X-\overline{X})$ for any $X \in T_{1,0} M$.
Since $HM$ is naturally oriented by the almost complex structure $J$, then $M$ is orientable if and only if
there exists a global nowhere vanishing 1-form $\theta$ such that $ HM = Ker (\theta) $.
Any such section $\theta$ is referred to as a pseudo-Hermitian structure on $M$.
The Levi form $L_\theta $ of a given pseudo-Hermitian structure $\theta$ is defined by
\[L_\theta (X, Y ) = d \theta (X, J Y) \quad  \mbox{ for any $X, Y \in HM$.} \]
An orientable CR manifold $(M, HM, J)$ is called strictly pseudo-convex if $L_\theta$ is positive definite for some $\theta$.
Such a quadruple $( M, HM, J, \theta )$ is called a pseudo-Hermitian manifold. 
For simplicity, we denote it by $(M, \theta)$.

For a pseudo-Hermitian manifold $(M, \theta)$, there exists a unique nowhere zero vector field $\xi$, called the Reeb vector field, transverse to $HM$ and satisfying
$\xi \lrcorner \: \theta =1, \ \xi \lrcorner \: d \theta =0$. 
It gives a decomposition of the tangent bundle $TM$: 
\begin{align}
TM = HM \oplus \mathbb{R} \xi 
\end{align}
which induces the projection $\pi_H : TM \to HM$. Set $G_\theta = \pi_H^* L_\theta$. Since $L_\theta$ is a metric on $HM$, it is natural to define a Riemannian metric
\begin{align}
g_\theta = G_\theta + \theta \otimes \theta
\end{align}
which makes $HM$ and $\mathbb{R} \xi$ orthogonal. 
The metric $g_\theta$ is called Webster metric, which is also denoted by $\la \cdot , \cdot \ra$ for simplicity.
By requiring $J \xi=0$, the almost complex structure $J$ can be extended to an endomorphism of $TM$.
Clearly, $\theta \wedge (d \theta)^m$ differs a constant with the volume form of $g_\theta$.
Henceforth it is always regarded as the canonical volume form in pseudo-Hermitian geometry.

It is remarkable that $(M, HM, G_\theta)$ could also be viewed as a sub-Riemannian manifold which satisfies the strong bracket generating hypothesis (see Appendix for details). 
The completeness of a sub-Riemannian manifold is well settled under the Carnot-Carath\'eorody distance (cf. \cite{strichartz1986sub}). 
Locally, the Carnot-Carath\'eorody distance and the Riemannian distance associated with the Webster metric $g_\theta$ can be controlled by each other (cf. \cite{nagel1985balls}), which leads that the former completeness is equivalent with the latter.
In this paper, a pseudo-Hermitian manifold $(M, \theta)$ is called complete if it is complete associated with the Webster metric $g_\theta$.

On a pseudo-Hermitian manifold, there exists a canonical connection $\nabla$, which is called Tanaka-Webster connection (cf. \cite{dragomir2006differential}), preserving the horizontal distribution, almost complete structure and Webster metric. Moreover, its torsion $T_\nabla$ satisfies
\begin{gather}
T_{\nabla} (X, Y)= 2 d \theta (X, Y) \xi \qquad \mbox{and } \quad T_{\nabla} (\xi, J X) + J T_{\nabla} (\xi, X) =0. \label{a-torsion}
\end{gather}
The pseudo-Hermitian torsion, denoted by $\tau$, is a symmetric and traceless tensor defined by $\tau (X) = T_\nabla (\xi, X)$ for any $X \in TM$ (cf. \cite{dragomir2006differential}).
Set
\begin{align*}
A (X, Y) = g_\theta ( \tau (X) , Y), \quad \mbox{for any $X, Y \in TM$}.
\end{align*}
A pseudo-Hermitian manifold is Sasakian if $\tau \equiv 0$.
Sasakian geometry plays important roles in K\"ahler geometry and Einstein metrics (cf. \cite{boyer2008sasakian}).

Suppose that $(M,\theta)$ is a pseudo-Hermitian manifold of real dimension $2m+1$. 
Let $R$ be the curvature tensor of the Tanaka-Webster connection. 
Set
\begin{align*}
R (X , Y, Z, W) = \langle R(Z, W) Y, X \rangle, \quad \mbox{ for any } X, Y, Z, W \in TM.
\end{align*}
Let $\{ \eta_\alpha\}_{\alpha=1}^m$ be a local unitary frame of $T_{1,0} M$ and $R_{ABCD}$ be the components of $R$ under the frame $\{ \eta_0 = \xi, \eta_\alpha, \eta_{\bar{\alpha}} \}$.
Webster \cite{webster1978pseudo} derived the first Bianchi identity, i.e. 
\begin{align*}
R_{\ba \beta \lambda \bm} = R_{\ba \lambda \beta \bm}.
\end{align*}
The other components of $R$ can be expressed by the pseudo-Hermitian torsion and its derivative.
For example,
\begin{align*}
R_{\bar{\alpha} \beta \lambda \mu} = 2 i (A_{\beta \mu} \delta_{\bar{\alpha} \lambda} - A_{\beta \lambda} \delta_{\bar{\alpha} \mu}), \quad R_{\bar{\alpha} \beta 0 \mu} = - A_{\beta \mu, \bar{\alpha}}, \quad R_{\bar{\alpha} \beta 0 \bar{\mu}} = A_{\bar{\alpha} \bar{\mu}, \beta}
\end{align*}
where $A_{\beta \mu, \bar{\alpha}}, A_{\bar{\alpha} \bar{\mu}, \beta}$ are the components of $\nabla A$.
Tanaka \cite{tanaka1975differential} defined the pseudo-Hermitian Ricci tensor $R_*$ by 
\begin{align}
R_* X = - i \sum_{\lambda=1}^m R(\eta_\lambda, \eta_{\bar{\lambda}}) JX \quad \mbox{ for  any } X \in T_{1,0} M.
\end{align}
The pseudo-Hermitian scalar curvature is given by
\begin{align}
s = \frac{1}{2} trace_{G_\theta} R_* .
\end{align}
In this paper, we will use Einstein summation convention when there is a repeated index. Denote $R_{\lambda \bar{\mu}} = R_{\bar{\alpha} \alpha \lambda \bar{\mu}}$.
Hence by the first Bianchi identity, $R_* \eta_\alpha = R_{\alpha \bar{\beta}} \eta_\beta$ and $s = R_{\alpha \bar{\alpha}}$.

Assume that $(N,h)$ is a Riemannian manifold. Let $\{ \sigma^i \}$ be an local orthonormal frame of $T^*N$. Denote the Levi-Civita connection and the Riemannian curvature of $(N,h)$ by $\nabla^N$ and $R^N$ respectively. Suppose that $f: M \rightarrow N$ is a smooth map. The pullback connection on the pullback bundle $f^*(TN)$ and the Tanaka-Webster connection induce a connection on $TM \otimes f^*(TN)$, also denoted by $\nabla$. 

\begin{definition}
A smooth map $f: M \rightarrow N$ is called pseudo-harmonic if the tensor field
\begin{align*}
\tau_H (f) \stackrel{\Delta}{=} \mbox{trace}_{G_\theta} \nabla_b d_b f \equiv 0,
\end{align*}
where $\nabla_b d_b f$ is the restriction of $\nabla df$ onto $HM \times HM$.
\end{definition}

Actually, pseudo-harmonic maps are the Dirichlet critical points of the horizontal energy (cf. \cite{barletta2001pseudoharmonic,dragomir2006differential})
\begin{align}
E_H (f) = \frac{1}{2} \int_M |d_b f|^2 \theta \wedge (d \theta)^m
\end{align}
where $d_b f$ is the horizontal restriction of $df$.
The sub-Laplacian $\Delta_b u$ of a smooth function $u$ is defined by
\begin{align}
\Delta_b u = \mbox{trace}_{G_\theta} \nabla_b d_b u, \label{b-2}
\end{align}
which is viewed as the special case of $\tau_H$ acting on functions.

\begin{lemma}[CR Bochner Formulas, cf. \cite{chang2013existence,greenleaf1985first,Ren201447}] \label{b-lemma-bochner}
For any smooth map $f: M \rightarrow N $, we have
\begin{align}
\frac{1}{2} \Delta_b |d_b f|^2=& |\nabla_b d_b f|^2 + \la \nabla_b \tau_H  (f), d_b f \ra + 4 i (f^i_{\ba} f^i_{0 \alpha } - f^i_\alpha f^i_{0  \ba} ) \nonumber \\
& + 2 R_{\alpha \bb} f^i_{\ba} f^i_{\beta} - 2i (m-2) (f^i_\alpha f^i_\beta A_{\ba \bb}- f^i_{\ba} f^i_{\bb} A_{\alpha \beta}  ) \nonumber \\
&+ 2 (f^i_{\ba} f^j_{\beta} f^k_{\bb} f^l_{\alpha} R^N_{ijkl} +f^i_{\alpha} f^j_{\beta} f^k_{\bb} f^l_{\ba} R^N_{ijkl}   ) \\
\frac{1}{2} \Delta_b |f_0|^2 =& |\nabla_b f_0|^2 + \la \nabla_\xi \tau_H (f) , f_0 \ra + 2 f^i_{0} f^j_{\alpha} f^k_{\ba} f^l_{0} R^N_{ijkl} \nonumber \\
& + 2 (f^i_0 f^i_\beta A_{\bb \ba, \alpha} + f^i_0 f^i_{\bb} A_{\beta \alpha, \ba} + f^i_0 f^i_{\bb \ba} A_{\beta \alpha } + f^i_0 f^i_{\beta \alpha } A_{\bb \ba} )
\end{align}
where $f^i_A$ and $f^i_{AB}$ are the components of $d f$ and $\nabla df$ respectively under the orthonormal coframe $ \{ \theta , \theta^\alpha, \theta^{\ba} \}$ of $T^*M$ and an orthonormal frame $\{ \sigma_i \}$ of $T^* N$, and $f_0 = df (\xi)$.
\end{lemma}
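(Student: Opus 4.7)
\medskip
\noindent\textbf{Proof proposal.} The plan is a direct index computation in a local unitary frame $\{\eta_\alpha\}$ of $T_{1,0}M$ with dual coframe $\{\theta^\alpha\}$, working with the orthonormal coframe $\{\theta,\theta^\alpha,\theta^{\bar\alpha}\}$. In this frame, $|d_b f|^2 = 2\, f^i_\alpha f^i_{\bar\alpha}$ (summed over $i,\alpha$) and for any function $u$ one has $\Delta_b u = u_{\alpha\bar\alpha}+u_{\bar\alpha\alpha}$. Applying this to $u=|d_bf|^2$ and expanding with the Leibniz rule gives the expected $|\nabla_b d_b f|^2$ piece from products of second derivatives, plus third-derivative terms of the form $f^i_\alpha f^i_{\bar\alpha\beta\bar\beta}$, $f^i_{\bar\alpha} f^i_{\alpha\bar\beta\beta}$, and so on. The task is to turn these third-derivative terms into $\langle\nabla_b \tau_H(f),d_bf\rangle$ modulo curvature and torsion corrections.

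The core input is a pair of commutation formulas for the pullback connection on $TM\otimes f^*TN$:
\[
f^i_{AB}-f^i_{BA}=-T_{\nabla}(E_A,E_B)^C\, f^i_C,
\]
\[
f^i_{ABC}-f^i_{ACB}=R^N_{ijkl}f^j_A f^k_B f^l_C - T_\nabla(E_B,E_C)^D f^i_{AD} - R^{D}{}_{ABC}\,f^i_{D}.
\]
For the Tanaka--Webster connection, \eqref{a-torsion} gives $T_\nabla(\eta_\alpha,\eta_{\bar\beta})=2i\delta_{\alpha\beta}\xi$ and $T_\nabla(\xi,\eta_\alpha)=A^{\bar\beta}_\alpha\eta_{\bar\beta}$, which produce the $f^i_{0\alpha}$, $f^i_{0\bar\alpha}$ and $A$--weighted second-derivative terms when the commutators are applied. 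I would permute $f^i_{\bar\alpha\beta\bar\beta}\to f^i_{\beta\bar\beta\bar\alpha}$ (with corrections) and then apply a horizontal derivative to $\tau_H^i=f^i_{\gamma\bar\gamma}+f^i_{\bar\gamma\gamma}$, using the first Bianchi identity $R_{\bar\alpha\beta\lambda\bar\mu}=R_{\bar\alpha\lambda\beta\bar\mu}$ to reorganize the curvature contractions into the Ricci combination $R_{\alpha\bar\beta}f^i_{\bar\alpha}f^i_{\beta}$. The remaining $R_{\bar\alpha\beta\lambda\mu}$ and $R_{\bar\alpha\beta\lambda\bar\mu}$ pieces that do not assemble into Ricci are replaced, via the curvature identities
\[
R_{\bar\alpha\beta\lambda\mu}=2i(A_{\beta\mu}\delta_{\bar\alpha\lambda}-A_{\beta\lambda}\delta_{\bar\alpha\mu}),\qquad R_{\bar\alpha\beta 0\mu}=-A_{\beta\mu,\bar\alpha},
\]
with the torsion terms; a careful bookkeeping of the multiplicity of contractions yields exactly the coefficient $-2i(m-2)$ in front of $f^i_\alpha f^i_\beta A_{\bar\alpha\bar\beta}-f^i_{\bar\alpha}f^i_{\bar\beta}A_{\alpha\beta}$. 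The $R^N$ contribution is simply the curvature term produced by the pullback connection commuted in the three-tensor identity above.

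For the second formula, I would apply the same strategy to $|f_0|^2=f^i_0 f^i_0$, writing
\[
\tfrac12\Delta_b|f_0|^2=|\nabla_b f_0|^2+f^i_0(f^i_{0\alpha\bar\alpha}+f^i_{0\bar\alpha\alpha}).
\]
Now I commute $f^i_{0\alpha\bar\alpha}$ to $f^i_{\alpha\bar\alpha 0}$-type derivatives, producing curvature terms involving $R_{\bar\alpha\beta 0\mu}$ and $R_{\bar\alpha\beta 0\bar\mu}$; substituting the identities $R_{\bar\alpha\beta 0\mu}=-A_{\beta\mu,\bar\alpha}$ and $R_{\bar\alpha\beta 0\bar\mu}=A_{\bar\alpha\bar\mu,\beta}$ gives the $\nabla A$-weighted pieces $2f^i_0 f^i_\beta A_{\bar\beta\bar\alpha,\alpha}+2f^i_0 f^i_{\bar\beta}A_{\beta\alpha,\bar\alpha}$. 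The pullback commutator at the first slot produces the $R^N_{ijkl}f^i_0 f^j_\alpha f^k_{\bar\alpha}f^l_0$ term; the commutator of the horizontal Hessian with $\xi$ through $T_\nabla(\xi,\cdot)=\tau$ generates the $A_{\beta\alpha}f^i_0 f^i_{\bar\beta\bar\alpha}$ and $A_{\bar\beta\bar\alpha}f^i_0 f^i_{\beta\alpha}$ terms, and $\nabla_\xi\tau_H(f)$ appears by definition from the $f^i_{00\dots}$ piece.

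The entire proof is thus a (somewhat lengthy) index computation whose only real obstacle is disciplined bookkeeping: making sure every commutator contributes with the right sign and multiplicity so that the curvature contractions assemble into the pseudo-Hermitian Ricci tensor and that the torsion pieces combine with the correct coefficient $(m-2)$. This is a standard CR Bochner calculation and essentially matches the computations of Greenleaf and of Chang--Chiu cited in the statement, so I would refer to those references for the routine verification while carrying out the key commutations explicitly as above.
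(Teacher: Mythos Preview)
Your proposal is correct and matches the standard approach; note, however, that the paper does not actually prove this lemma but simply cites it from \cite{chang2013existence,greenleaf1985first,Ren201447}. The index computation you outline---expanding $\Delta_b$ via the Leibniz rule, commuting third covariant derivatives using the Tanaka--Webster torsion and curvature identities, and reassembling the pieces into $\langle\nabla_b\tau_H(f),d_bf\rangle$, the Ricci term, the torsion term with coefficient $(m-2)$, and the target curvature term---is precisely the calculation carried out in those references, so your sketch is consistent with the literature the paper relies on.
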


Let $\pi_{(1,1)} \nabla_b d_b f $ be the $(1,1)$-part of $\nabla_b d_b f$ and
\begin{align*}
\pi_{(1,1)}^\perp \nabla_b d_b f = \nabla_b d_b f - \pi_{(1,1)} \nabla_b d_b f
\end{align*}
which is orthogonal to $\pi_{(1,1)} \nabla_b d_b f$. The commutation relation (cf. \cite{chang2013existence,Ren201447})
\begin{align}
f^i_{\alpha \bar{\beta}} - f^i_{\bar{\beta} \alpha} = 2 i f^i_0 \delta_{\alpha \bar{\beta}} \label{b-commutation-1}
\end{align}
shows that 
\begin{align}
| \pi_{(1,1)} \nabla_b d_b f|^2 \geq & 2 \sum_{\alpha = 1}^m f^i_{\alpha \bar{\alpha}} f^i_{\bar{\alpha} \alpha}  \nonumber \\
= & \frac{1}{2} \sum_{\alpha=1}^m \big[ |f^i_{\alpha \ba} +f^i_{\ba \alpha} |^2 + |f^i_{\alpha \ba} - f^i_{\ba \alpha} |^2  \big] \nonumber \\ 
\geq &\frac{1}{2} \sum_{\alpha=1}^m |f^i_{\alpha \ba} -f^i_{\ba \alpha} |^2  \nonumber \\
=& 2m |f_0|^2. \label{b-commutation}
\end{align}
Combining with Lemma \ref{b-lemma-bochner}, we have the following lemma.

\begin{lemma} \label{b-lem-estimate}
Suppose that $(M^{2m+1}, \theta)$ is a pseudo-Hermitian manifold with
\begin{align}
R_* \geq -k, \mbox{ and } |A|, |\mbox{div } A| \leq k_1
\end{align}
and $(N, h)$ is a Riemannian manifold with sectional curvature 
\begin{align}
K^N \leq \kappa
\end{align}
for $k, k_1, \kappa \geq 0$. Then there exists $C_{\ref*{cst-reebcrbochner}} = C_{\ref*{cst-reebcrbochner}} (k, k_1)$ such that for any pseudo-harmonic map $f : M \to N$, we have
\begin{align}
\Delta_b |d_b f|^2 \geq & (2- \epsilon) |\nabla_b d_b f|^2 + 2m \epsilon |f_0|^2 + \epsilon |\pi_{(1,1)}^{\perp} \nabla_b d_b f|^2 \nonumber \\
& - \epsilon_1 |\nabla_b f_0|^2 - (C_{\ref*{cst-reebcrbochner}} + 16 \epsilon_1^{-1} ) |d_b f|^2 -2 \kappa |d_b f|^4 \label{b-bochner1}
\end{align}
and
\begin{align}
\Delta_b |f_0|^2 \geq  2 |\nabla_b f_0|^2 -2 \kappa |f_0|^2 |d_b f|^2 - C_{\ref*{cst-reebcrbochner}} | \pi_{(1,1)}^{\perp} \nabla_b d_b f |^2 - C_{\ref*{cst-reebcrbochner}} |f_0|^2 - C_{\ref*{cst-reebcrbochner}} |d_b f|^2 \label{b-bochner2}
\end{align}
where $\epsilon$ and $\epsilon_1$ are any positive number.
In particular, if $k =0$ and $k_1 =0$, then $C_{\ref*{cst-reebcrbochner}} = 0$.
\end{lemma}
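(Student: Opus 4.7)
The plan is to derive both inequalities directly from the CR Bochner formulas in Lemma~\ref{b-lemma-bochner}. After multiplying those formulas by $2$ and using the pseudo-harmonicity hypothesis to eliminate the terms $\langle\nabla_b\tau_H(f),d_b f\rangle$ and $\langle\nabla_\xi\tau_H(f),f_0\rangle$, the remaining terms split into curvature contributions (to be controlled by the hypotheses $R_*\geq -k$, $|A|,|\mbox{div}\,A|\leq k_1$, $K^N\leq\kappa$) and coupling terms involving $\nabla_b f_0$ or second derivatives of $f$ (to be controlled by Cauchy--Schwarz). The weighting parameters $\epsilon,\epsilon_1$ and the orthogonal decomposition $\nabla_b d_b f=\pi_{(1,1)}\nabla_b d_b f+\pi_{(1,1)}^\perp\nabla_b d_b f$ then assemble the right-hand sides in the shapes prescribed by \eqref{b-bochner1} and \eqref{b-bochner2}.

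For \eqref{b-bochner1} I would estimate the extra terms in the Bochner formula for $\frac{1}{2}\Delta_b|d_b f|^2$ one by one. The Ricci term is bounded below by a constant multiple of $-k|d_b f|^2$, the $(m-2)$-torsion terms by $-Ck_1|d_b f|^2$ using $|A|\leq k_1$ and Cauchy--Schwarz, and the target-curvature quartic by a constant multiple of $-\kappa|d_b f|^4$ via the sectional curvature upper bound. The critical mixed term $4i(f^i_{\bar\alpha}f^i_{0\alpha}-f^i_\alpha f^i_{0\bar\alpha})$ couples $d_b f$ with $\nabla_b f_0$; I would handle it via $|4i(f^i_{\bar\alpha}f^i_{0\alpha}-f^i_\alpha f^i_{0\bar\alpha})|\leq 8|d_b f|\,|\nabla_b f_0|$ followed by weighted Cauchy--Schwarz with parameter $\epsilon_1$, which after the factor of $2$ yields exactly the $-\epsilon_1|\nabla_b f_0|^2-16\epsilon_1^{-1}|d_b f|^2$ pair. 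To extract $2m\epsilon|f_0|^2$ and $\epsilon|\pi_{(1,1)}^\perp\nabla_b d_b f|^2$ I would split
\[
2|\nabla_b d_b f|^2=(2-\epsilon)|\nabla_b d_b f|^2+\epsilon|\pi_{(1,1)}\nabla_b d_b f|^2+\epsilon|\pi_{(1,1)}^\perp\nabla_b d_b f|^2
\]
and insert the commutation identity \eqref{b-commutation}, namely $|\pi_{(1,1)}\nabla_b d_b f|^2\geq 2m|f_0|^2$. Packaging every $k,k_1$-dependent prefactor into a single constant $C_{\ref*{cst-reebcrbochner}}(k,k_1)$ then gives \eqref{b-bochner1}; since every such prefactor is linear in $k$ or polynomial in $k_1$, the constant vanishes when $k=k_1=0$.

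For \eqref{b-bochner2} I would run the same scheme on the Bochner formula for $\frac{1}{2}\Delta_b|f_0|^2$. The target-curvature contribution is at least $-\kappa|f_0|^2|d_b f|^2$ up to a constant. The two $\mbox{div}\,A$ terms $2(f^i_0 f^i_\beta A_{\bar\beta\bar\alpha,\alpha}+f^i_0 f^i_{\bar\beta}A_{\beta\alpha,\bar\alpha})$ are bounded by $Ck_1|f_0|\,|d_b f|$ and then split by AM--GM into $-C(|f_0|^2+|d_b f|^2)$. The remaining two torsion terms $2(f^i_0 f^i_{\bar\beta\bar\alpha}A_{\beta\alpha}+f^i_0 f^i_{\beta\alpha}A_{\bar\beta\bar\alpha})$ contain the pure-type second derivatives $f^i_{\beta\alpha}$ and $f^i_{\bar\beta\bar\alpha}$; the key observation -- and the only genuine conceptual obstacle in the whole proof -- is that these are precisely the components of $\pi_{(1,1)}^\perp\nabla_b d_b f$, so they are dominated by $|A|\,|f_0|\,|\pi_{(1,1)}^\perp\nabla_b d_b f|$ and absorbed by AM--GM into $Ck_1(|f_0|^2+|\pi_{(1,1)}^\perp\nabla_b d_b f|^2)$. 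This identification is what allows these contributions to be controlled by a term of the exact shape $-C_{\ref*{cst-reebcrbochner}}|\pi_{(1,1)}^\perp\nabla_b d_b f|^2$ on the right-hand side, instead of by an unrestricted $|\nabla_b d_b f|^2$ which would later spoil the gradient estimate of Theorem~\ref{d-thm-estimate}. Once again every collected constant is proportional to $k$ or $k_1$, so $C_{\ref*{cst-reebcrbochner}}$ vanishes in the Sasakian, nonnegative-Ricci case.
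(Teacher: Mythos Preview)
Your proposal is correct and follows essentially the same route as the paper: start from the CR Bochner formulas of Lemma~\ref{b-lemma-bochner}, kill the $\tau_H(f)$ terms by pseudo-harmonicity, control the mixed term $4i(f^i_{\bar\alpha}f^i_{0\alpha}-f^i_\alpha f^i_{0\bar\alpha})=-4\langle\nabla_b f_0,d_b f\circ J\rangle$ by weighted Cauchy--Schwarz, split $2|\nabla_b d_b f|^2$ using the orthogonal $(1,1)$-decomposition together with \eqref{b-commutation}, and bound the target-curvature pieces by the sectional upper bound; the paper's written proof is terser and concentrates on the explicit computation of the $R^N$ terms via $df(\eta_\alpha)=t_\alpha+it_\alpha'$, but the remaining steps you describe are exactly what its phrase ``due to \eqref{b-commutation}, Cauchy inequality and the identity'' is covering. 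Your identification of $f^i_{\alpha\beta},f^i_{\bar\alpha\bar\beta}$ with the components of $\pi_{(1,1)}^\perp\nabla_b d_b f$ in the second inequality is the right observation and is precisely what the statement of \eqref{b-bochner2} encodes.
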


\begin{proof}
For \eqref{b-bochner1}, due to \eqref{b-commutation}, Cauchy inequality and the identity
\begin{align*}
i (f^i_{\bar{\alpha}} f^i_{0 \alpha} - f^i_\alpha f^i_{0 \bar{\alpha}}) = -\langle \nabla_b f_0 , d_b f \circ J \rangle,
\end{align*}
it suffice to prove that
\begin{align}
f^i_{\ba} f^j_{\beta} f^k_{\bb} f^l_{\alpha} R^N_{ijkl} +f^i_{\alpha} f^j_{\beta} f^k_{\bb} f^l_{\ba} R^N_{ijkl} \geq - \frac{1}{2} \kappa |d_b f|^4. \label{b-1}
\end{align}
Set
\begin{align*}
d f (\eta_\alpha) = t_\alpha + i t_\alpha' .
\end{align*}
Hence due to sectional curvature $K^N \leq \kappa$, a direct calculation shows that 
\begin{align*}
& f^i_{\ba} f^j_{\beta} f^k_{\bb} f^l_{\alpha} R^N_{ijkl} +f^i_{\alpha} f^j_{\beta} f^k_{\bb} f^l_{\ba} R^N_{ijkl} \\
& = 2 \big(\langle R^N (t_\beta, t_\alpha) t_\beta, t_\alpha \rangle + \langle R^N (t_\beta, t_\alpha') t_\beta, t_\alpha' \rangle + \langle R^N (t_\beta', t_\alpha) t_\beta', t_\alpha \rangle + \langle R^N (t_\beta', t_\alpha') t_\beta', t_\alpha' \rangle \big) \\
& \geq - 2 \kappa \sum_{\alpha, \beta = 1}^m (|t_\alpha|^2 |t_\beta|^2 + |t_\alpha'|^2 |t_\beta|^2 + |t_\alpha|^2 |t_\beta'|^2 + |t_\alpha'|^2 |t_\beta'|^2) \\
& = -2 \kappa \left( \sum_{\alpha =1}^m (|t_\alpha|^2 + |t_\alpha'|^2 ) \right) \left( \sum_{\beta =1}^m (|t_\beta|^2 + |t_\beta'|^2 ) \right)
\end{align*}
which, combining with 
\begin{align*}
|d_b f |^2 = 2 \sum_{\alpha=1}^m \langle d f(\eta_\alpha), df (\eta_{\bar{\alpha}}) \rangle = 2 \sum_{\alpha=1}^m \langle t_\alpha + i t_\alpha', t_\alpha - i t_\alpha' \rangle = 2 \sum_{\alpha=1}^m  (|t_\alpha|^2 + |t_\alpha'|^2 ) 
\end{align*}
yields \eqref{b-1}.

Similarly, \eqref{b-bochner2} follows from the following process
\begin{align*}
f^i_{0} f^j_{\alpha} f^k_{\ba} f^l_{0} R^N_{ijkl} & = \langle R^N (t_\alpha - i t_\alpha', f_0) (t_\alpha + i t_\alpha'), f_0 \rangle \\
& = \langle R^N (t_\alpha, f_0) t_\alpha, f_0 \rangle + \langle R^N (t_\alpha' , f_0) t_\alpha', f_0 \rangle \\
& \geq - \kappa |f_0|^2 \left( \sum_{\alpha =1}^m (|t_\alpha|^2 + |t_\alpha'|^2 ) \right) \\
& = - \frac{1}{2} \kappa |f_0|^2 |d_b f|^2.
\end{align*}
\end{proof}

At the end of this Section, we briefly recall Folland-Stein space. Let $(M,\theta)$ be a pseudo-Hermitian manifold and $\Omega \Subset M$. For any $k \in \mathbb{N}$ and $p>1$, the Folland-Stein space $S^p_k (\Omega)$ is given by 
\begin{align*}
S_k^p (\Omega) = \big\{ u \in L^p (\Omega) \big| \: \nabla_b^l u \in L^p (\Omega), l = 0, 1, \dots, k \big\}
\end{align*}
where $\nabla^l_b u$ is the horizontal restriction of $\nabla^l u$ and its $S^p_k$-norm is defined by 
\begin{align}
||u||_{S^p_k (\Omega)} = \sum_{l=0}^k ||\nabla_b^l u||_{L^p (\Omega)}, \label{b-sobolev-1}
\end{align}
which is equivalent to the local Folland-Stein norm in \cite{dragomir2006differential} (see Appendix for details).
Under this generalized Sobolev space, the interior regularity theorem of subelliptic equations will behave as elliptic ones.

\begin{theorem} \label{b-lem-regular}
Suppose that $(M,\theta)$ is a pseudo-Hermitian manifold and $\Omega \Subset M$. Assume that $u, v \in L_{loc}^1 (\Omega)$ and $\Delta_b u = v$ in the distribution sense. For any $\chi \in C^\infty_0 (\Omega)$, if $v \in S^p_k (\Omega) $ with $p >1$ and $k \in \mathbb{N}$, then $\chi u \in S^p_{k+2} (\Omega)$ and 
\begin{align}
||\chi u||_{S^p_{k+2} (\Omega)} \leq C_{\chi} \left( ||u||_{L^p (\Omega)} + ||v||_{S^p_k (\Omega)} \right)
\end{align}
where $C_{\chi}$ only depends on $\chi$.
\end{theorem}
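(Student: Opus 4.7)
The plan is to invoke the Folland--Stein $L^p$-regularity theory for subelliptic sum-of-squares operators, combined with a standard bootstrap on the order of regularity. The strong bracket generating hypothesis ensures that Hörmander's condition (of step 2) is satisfied for any local orthonormal frame $\{X_1,\dots,X_{2m}\}$ of $HM$, and locally the sub-Laplacian takes the form
\[
\Delta_b = \sum_{i=1}^{2m} X_i^2 + Y,
\]
with $Y$ a smooth horizontal first-order term. This puts $\Delta_b$ in the Hörmander class for which the theory applies.

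First, I would handle the base case $k=0$. Fix $\chi \in C^\infty_0(\Omega)$ and choose nested cutoffs $\chi \prec \chi_1 \prec \chi_2$ all supported in $\Omega$. Applying the Rothschild--Stein lifting theorem to approximate $(\Omega,HM)$ locally by the Heisenberg group, and invoking Folland's $L^p$-theory for the canonical sub-Laplacian on the Heisenberg group, one obtains the model estimate
\[
\|\chi u\|_{S^p_2(\Omega)} \leq C\bigl(\|\chi u\|_{L^p(\Omega)} + \|\Delta_b(\chi u)\|_{L^p(\Omega)}\bigr).
\]
Expanding $\Delta_b(\chi u) = \chi v + 2\langle d_b\chi, d_b u\rangle + u\,\Delta_b\chi$, the cross term carries only one horizontal derivative of $u$ and is absorbed either by interpolation of $\|\chi_1 u\|_{S^p_1}$ between $\|u\|_{L^p}$ and $\|\chi_2 u\|_{S^p_2}$, or by an iteration argument on a shrinking family of cutoffs.

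Second, I would proceed by induction on $k$. Assuming the result at level $k-1$, differentiate the equation along a horizontal vector field $X_j$:
\[
\Delta_b(X_j u) = X_j v + [\Delta_b, X_j] u.
\]
By the inductive hypothesis applied to $X_j u$ with right-hand side in $S^p_{k-1}$, one would gain two orders of horizontal regularity on $X_j u$, hence $k+2$ horizontal derivatives on $u$ after inserting the outer cutoff. The commutator $[\Delta_b, X_j]$ generates, via the strong bracket generating hypothesis, a term proportional to the Reeb derivative $\xi u$, and the Reeb derivative of $u$ is itself recovered from $\Delta_b u$ and two horizontal derivatives of $u$ through the commutation relation \eqref{b-commutation-1} applied to the scalar function $u$.

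The main technical obstacle is keeping the nested-cutoff bookkeeping consistent: each commutator step produces a Reeb derivative that must be traded back for horizontal derivatives of one order higher, and each such trade requires applying the already-established subelliptic estimate on a slightly larger support. Once the cutoff hierarchy is set up so that the gain of two horizontal derivatives strictly dominates the loss incurred by each commutator and by differentiating the cutoffs, the induction closes and yields the stated bound with a constant $C_\chi$ depending only on the successive derivatives of $\chi$ and its enlargements.
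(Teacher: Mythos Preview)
Your proposal takes a genuinely different route from the paper. The paper's proof is only a few lines: it quotes the coordinate-chart version of the estimate (Theorem~3.17 in Dragomir--Tomassini, equivalently Theorem~16 in Rothschild--Stein) as a black box, covers $\operatorname{supp}\chi$ by finitely many coordinate neighborhoods $\{\Omega_\alpha\}$, takes a subordinate partition of unity $\{\chi_\alpha\}$, applies the cited local estimate to each $\chi_\alpha\chi u$, and sums. No base case, no induction, no commutator bookkeeping --- all of that is hidden inside the cited reference. What you are sketching is essentially a direct re-derivation of that cited local theorem, which is legitimate but far more work than the paper intends here.

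There is also a genuine gap in your induction step. The commutator $[\Delta_b,X_j]$ is not a lower-order horizontal operator: already on the Heisenberg model one has $[\Delta_b,X_j]=-2\,Y_j\,\xi$, which is order $3$ in the Folland--Stein grading. Under your inductive hypothesis (locally $u\in S^p_{k+1}$), the trade $\xi u\sim$ second horizontal derivatives of $u$ places $\xi u$ only in $S^p_{k-1}$, hence $X_i\,\xi u\in S^p_{k-2}$, one order short of the $S^p_{k-1}$ you need to feed back into the right-hand side. The gain of two never ``strictly dominates'' this loss; the deficit is exactly one at every stage. The standard repairs are either to bootstrap $\xi$-regularity in parallel (using that $[\Delta_b,\xi]$ is a genuine second-order horizontal operator, so $\Delta_b(\xi u)=\xi v+[\Delta_b,\xi]u$ does close), or to bypass differentiation entirely and argue through the parametrix of $\Delta_b$ as Rothschild--Stein do. Either fix is substantial; the paper sidesteps the whole issue by citing the finished result and reducing the global statement to a partition-of-unity argument.
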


The proof is based on partition of unity and the corresponding version on coordinate neighborhoods (cf. Theorem 3.17 in \cite{dragomir2006differential}, Theorem 16 in \cite{rothschild1976hypoelliptic}). For completeness, we will give the details in Appendix.
A direct calculation shows that for any $ \sigma \in \Gamma ( \otimes^k T^* M )  $ and $ X_1 , \cdots , X_k, X, Y \in \Gamma (HM) $, we have
\begin{align*}
	& ( \nabla^2 \sigma )   ( X_1 , \cdots , X_k ; X, Y ) - ( \nabla^2 \sigma )   ( X_1 , \cdots , X_k ; Y, X ) \nonumber \\
	&= \sum_{i=1}^k \sigma  ( X_1 , \cdots , R(X, Y) X_i, \cdots,  X_k ) + \big( \nabla_{T_\nabla (X, Y) }  \sigma \big) ( X_1 , \cdots , X_k ).
\end{align*}
By taking $\sigma = \nabla_b^k u $ and $X = \eta_\alpha, Y= \eta_{\bar{\beta}}$, we obtain that
\begin{align*}
2 i \delta_{\alpha \bar{\beta}} \nabla_\xi \nabla_b^k u (X_1, \cdots, X_k) 
& = ( \nabla_b^{k+2} u )   ( X_1 , \cdots , X_k ; \eta_\alpha, \eta_{\bar{\beta}} ) - ( \nabla_b^{k+2} u )   ( X_1 , \cdots , X_k ; \eta_{\bar{\beta}}, \eta_\alpha ) \\
& \quad - \sum_{i=1}^k \nabla_b^k u  \big( X_1 , \cdots , R(\eta_\alpha, \eta_{\bar{\beta}}) X_i, \cdots,  X_k \big)
\end{align*}
which implies that Reeb covariant derivatives can be controlled by horizontal covariant derivatives.
Hence the Folland-Stein space may be embedded into some classical Sobolev space which is a generalization of Theorem 19.1 in \cite{folland1974estimates}.

\begin{theorem}
Suppose that $(M, \theta)$ is a pseudo-Hermitian manifold and $\Omega \Subset M$. 
Then for any $k \in \mathbb{N}$ and $p >1$, $$S^p_k (\Omega) \subset L^p_{k/2} (\Omega)$$ 
where $L^p_{k/2} (\Omega)$ is the classical Sobolev space. Moreover, for any $r \in \mathbb{N}$ and $p > \mbox{dim } M$, there exists $k \in \mathbb{N}$ such that
\begin{align*}
S^p_k (\Omega) \subset C^{r, \alpha} (\Omega).
\end{align*}
\end{theorem}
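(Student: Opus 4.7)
The plan is to quantify, by means of the commutation identity displayed just before the statement, the sub-Riemannian principle that a single derivative along the Reeb vector field $\xi$ costs two horizontal derivatives. I would assign to each iterated covariant derivative $\nabla_{Y_1}\cdots\nabla_{Y_j}u$ a \emph{weight} $w=\sum_i w(Y_i)$, where horizontal directions have weight $1$ and $\xi$ has weight $2$. The goal of the first step is to show, by induction on $w$, that any such derivative can be expressed on $\overline\Omega$ as a finite linear combination, with uniformly bounded smooth coefficients, of purely horizontal iterated derivatives $\nabla_b^s u$ with $s\le w$.

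The inductive step is precisely the displayed identity: tracing over a local unitary frame, it rewrites $\nabla_\xi(\nabla_b^k u)$ as a difference of horizontal derivatives of order $k+2$ plus a curvature term contracted with $\nabla_b^k u$. Commuting two horizontal Tanaka--Webster derivatives produces either a curvature-times-lower-order-$u$ term or, through the Tanaka--Webster torsion \eqref{a-torsion}, a $\xi$-derivative placed in front of a strictly lower-weight derivative of $u$; both cases feed back into the induction. Because $\Omega\Subset M$, the Tanaka--Webster curvature, torsion, their covariant derivatives, and the difference tensor between the Levi-Civita connection of $g_\theta$ and the Tanaka--Webster connection are smooth and uniformly bounded on $\overline\Omega$, so the coefficients produced at each stage are absorbed into a single constant.

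Using this reduction, the first claim follows: any classical iterated covariant derivative of order $j$ (with respect to the Levi-Civita connection of $g_\theta$) re-expands as a bounded combination of Tanaka--Webster iterated derivatives of order at most $j$; each such derivative has weight at most $2j$; and so by the previous step its pointwise norm is controlled by $\sum_{s\le 2j}|\nabla_b^s u|$. Integrating in $L^p$ and summing over $j\le k/2$ gives
\[
\|u\|_{L^p_{k/2}(\Omega)} \le C\,\|u\|_{S^p_k(\Omega)}.
\]
For the H\"older embedding, I would invoke the classical Morrey--Sobolev theorem: $L^p_m(\Omega)\hookrightarrow C^{r,\alpha}(\Omega)$ with $\alpha = 1-(\dim M)/p$ whenever $p>\dim M$ and $m>r+(\dim M)/p$. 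Since $p>\dim M$ forces $(\dim M)/p<1$, the choice $k=2r+2$ gives $k/2=r+1>r+(\dim M)/p$, and chaining the two embeddings produces $S^p_k(\Omega)\subset L^p_{k/2}(\Omega)\subset C^{r,\alpha}(\Omega)$.

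The main technical obstacle is the bookkeeping in the weight induction: one must verify that no commutator step ever raises the weight of the top-order term on the right-hand side. The saving point is that every structural tensor appearing in a commutator (curvature, torsion, connection difference) is of differential order zero in $u$, so each application of the displayed identity trades one $\xi$-derivative for two horizontal ones plus terms of strictly lower weight. The induction therefore closes, and uniform boundedness of the structural tensors on the compact set $\overline\Omega$ suffices to collapse every coefficient into a single constant in the final $L^p$ estimate.
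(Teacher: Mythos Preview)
Your proposal is correct and follows the same line the paper indicates: the paper does not give a detailed proof but only records the commutation identity expressing $\nabla_\xi\nabla_b^k u$ in terms of $\nabla_b^{k+2}u$ and curvature contractions of $\nabla_b^k u$, then cites Folland--Stein (Theorem~19.1 in \cite{folland1974estimates}) for the embedding. Your weight-counting induction is exactly the standard way to formalize that sketch; one minor remark is that commuting two horizontal derivatives is never actually needed in the reduction (you only need to move $\xi$ outward past horizontal directions and then apply the identity), so the slight imprecision about the torsion term being ``strictly lower weight'' is harmless.
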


\section{Sub-Laplacian Comparison Theorem} \label{sec-comparison}
In this section, we will deduce Theorem \ref{c-thm-sub} which plays a similar role as Laplacian comparison theorem in Riemannian geometry.

Suppose that $(M^{2m+1}, \theta)$ is a complete noncompact pseudo-Hermitian manifold. 
Let $r$ be the Riemannian distance with respect to Webster metric $g_\theta$ from a reference point $x_0 \in M$.
We formulate all Riemannian symbols with ``$\hat{\phantom{a}}$" to distinguish with ones in pseudo-Hermitian geometry, such as Levi-Civita connection $\hat{\nabla}$ and Riemannian curvature tensor $\hat{R}$.
Lemma 1.3 in \cite{dragomir2006differential} shows the relation of Tanaka-Webster connection and Levi-Civita connection associated with Webster metric:
\begin{align}
\hat{\nabla} = \nabla - (d \theta + A) \otimes \xi + \tau \otimes \theta + 2 \theta \odot J \label{c-connection}
\end{align}
where $2 \theta \odot J = \theta \otimes J + J \otimes \theta$. 
Hence the sub-Laplacian of $r$ can also be calculated by Levi-Civita connection as follows:
\begin{align}
\Delta_b r = \mbox{trace}_{G_\theta} \widehat{\mbox{Hess}} (r) \big|_{HM \times HM}
\end{align}
where $\widehat{Hess}$ is the Riemannian Hessian.

Let's recall the Index Lemma in Riemannian geometry (cf. \cite{do1992riemannian} in page 212).

\begin{lemma}[Index Lemma]
Let $\gamma : [0,a] \to M$ be a Riemannian geodesic without conjugate points to $\gamma (0)$ in $(0, a]$ and $X$ be a Jacobi field along $\gamma$ with $X \perp \dot{\gamma} $ and $X(0) =0$. If $V \in \Gamma (TM) \big|_{\gamma} $ with $V(0) =0, V(a) = X(a)$ and $V \perp \dot{\gamma}$. Then
\begin{align}
I_a (X, X) \leq I_a (V, V)
\end{align}
where
\begin{align*}
I_a (V, V) = \int_0^a \left( \big| \hat{\nabla}_{\dot{\gamma}} V \big|^2 - \langle \hat{R} (V, \dot{\gamma}) \dot{\gamma}, V \rangle \right) dt
\end{align*}
\end{lemma}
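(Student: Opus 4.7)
The plan is to prove this classical Index Lemma by expanding the test field $V$ in a basis of Jacobi fields adapted to $\gamma$. Since $\gamma$ has no conjugate point to $\gamma(0)$ in $(0,a]$, the space of Jacobi fields $J$ along $\gamma$ with $J(0)=0$ and $J \perp \dot\gamma$ is $2m$-dimensional (where $\dim M = 2m+1$), and I would choose a basis $\{J_1,\dots,J_{2m}\}$ of this space such that $\{J_1(t),\dots,J_{2m}(t)\}$ is linearly independent for every $t\in(0,a]$. Since the $J_i$ span $\dot\gamma^\perp$ at every interior $t$ and $V(0)=0$, I can write $V=\sum_i f_i J_i$ for scalar functions $f_i$ on $(0,a]$, and similarly $X=\sum_i c_i J_i$ for constants $c_i$; the boundary condition $V(a)=X(a)$ forces $f_i(a)=c_i$.

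The core computation is to rewrite the integrand. Setting $W=\sum_i f_i' J_i$ and $U=\sum_i f_i J_i'$, one has $\hat\nabla_{\dot\gamma}V = W + U$. Using the Jacobi equation $J_i''=-\hat R(J_i,\dot\gamma)\dot\gamma$ together with the symmetry identity
\[
\langle J_i', J_j\rangle = \langle J_i, J_j'\rangle,
\]
which holds for any pair of Jacobi fields vanishing at the same point (differentiate the difference and use the Jacobi equation), a direct expansion shows
\[
\bigl|\hat\nabla_{\dot\gamma}V\bigr|^2 - \langle \hat R(V,\dot\gamma)\dot\gamma, V\rangle = |W|^2 + \frac{d}{dt}\langle U, V\rangle.
\]
Integrating from $0$ to $a$ and using $V(0)=0$ together with $f_i(a)=c_i$ (so that $U(a)=X'(a)$ and $V(a)=X(a)$) yields
\[
I_a(V,V) = \int_0^a |W|^2\, dt + \langle X'(a), X(a)\rangle.
\]
Applying the same identity to the Jacobi field $X$ itself (for which all $f_i$ are constant, hence $W\equiv 0$) gives $I_a(X,X)=\langle X'(a), X(a)\rangle$. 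Subtracting produces $I_a(V,V)-I_a(X,X)=\int_0^a |W|^2\,dt\geq 0$, which is the claimed inequality.

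The main subtlety to be addressed carefully is the behaviour of the coefficients $f_i$ at $t=0$, where the basis $\{J_i\}$ degenerates. Here I would use the standard fact that each $J_i(t)/t$ extends to a smooth, nowhere vanishing vector field on $[0,a]$ whose value at $0$ equals $\hat\nabla_{\dot\gamma}J_i(0)$, so the rescaled collection forms a smooth frame on all of $[0,a]$; expanding $V(t)/t$ in this extended frame then shows that the $f_i$ extend smoothly to $t=0$ and that the boundary evaluation of $\langle U,V\rangle$ at $t=0$ vanishes, as used above. Beyond this, the argument is purely algebraic manipulation based on the Jacobi equation and the symmetry identity, both of which are standard in Riemannian comparison theory.
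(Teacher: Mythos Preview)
Your argument is the standard, correct proof of the Index Lemma (essentially the one in do Carmo, \emph{Riemannian Geometry}, p.~212, which the paper cites). Note, however, that the paper does not give its own proof of this statement: it merely recalls the lemma with a reference, so there is nothing to compare against beyond the cited textbook proof, which your proposal reproduces faithfully.
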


Now let $\gamma : [0,a] \to M$ be such a geodesic and $\{ e_B (a) \}_{B=1}^{2m} $ be an orthonormal basis of $HM \big|_{\gamma(a)} $. Set 
\begin{align*}
e_B^{\perp} (a) = e_B (a) - \langle e_B (a), \nabla r  \rangle \nabla r  \in TM \big|_{\gamma (a)}
\end{align*}
which is perpendicular to $\dot{\gamma} (a) = \nabla r \big|_{\gamma (a)}$.
Since $\widehat{Hess} (r) (\nabla r, \cdot) =0 $, then 
\begin{align}
\Delta_b r \big|_{\gamma (a)} = \sum_{B=1}^{2m} \widehat{Hess} (r) (e_B (a), e_B (a)) = \sum_{B=1}^{2m} \widehat{Hess} (r) (e_B^{\perp} (a), e_B^\perp (a))
\end{align}
Using the Riemannian exponential map, we could extend $e_B^{\perp} (a) $ as a Jacobi field $U_B$ along $\gamma$ with
\begin{align*}
U_B (0) = 0, U_B (a) = e_B^{\perp} (a), [U_B , \dot{\gamma}] = 0.
\end{align*}
Hence we find
\begin{align*}
\widehat{Hess} (r) (e_B^{\perp} (a), e_B^\perp (a)) & = \widehat{Hess} (r) ( U_B (a), U_B (a))  \\
& = \langle U_B  , \hat{\nabla}_{U_B} \nabla r \rangle \big|_{\gamma (a)} = \langle U_B  , \hat{\nabla}_{\dot{\gamma}} U_B \rangle \big|_{\gamma (a)} = \int_0^a \frac{d}{dt} \langle U_B , \hat{\nabla}_{\dot{\gamma}} U_B \rangle dt\\
&  = \int_0^a \left( \big| \hat{\nabla}_{\dot{\gamma}} U_B \big|^2 + \langle U_B, \hat{\nabla}_{\dot{\gamma}} \hat{\nabla}_{\dot{\gamma}} U_B \rangle \right)  dt = I_a (U_B, U_B),
\end{align*}
where the last equation is due to the Jacobi equation.
Hence 
\begin{align}
\Delta_b r \big|_{\gamma (a)} = \sum_{B =1}^{2m } I_a (U_B, U_B). \label{c-subhess}
\end{align}

\begin{lemma} \label{c-lem1}
Let $e_B (t) $ be the parallel extension of $e_B (a)$ along $\gamma$ with respect to Tanaka-Webster connection.
Suppose the curvature along $\gamma$ satisfies
\begin{align}
\sum_{B=1}^{2m} \langle \hat{R} (e_B, \nabla r ) \nabla r , e_B \rangle \geq - \hat{k} \label{c-lem1cur}
\end{align}
and the pseudo-Hermitian torsion is bounded, i.e.
\begin{align}
|A| \leq k_1,
\end{align}
for some for $\hat{k}, k_1 \geq 0$.
\constantnumber{cst-1}
Then there is a constant $C_{\ref*{cst-1}} = C_{\ref*{cst-1}} (m)$ such that
\begin{align}
\Delta_b r \big|_{\gamma (a)} \leq C_{\ref*{cst-1}} \left(\frac{1}{a} + \sqrt{1 + k_1 + k_1^2 + \hat{k}} \right).
\end{align}
\end{lemma}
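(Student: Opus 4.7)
The strategy will be to apply the Index Lemma with an explicit family of competitor vector fields built from the Tanaka--Webster-parallel frame $\{e_B(t)\}$, and then optimize a scalar weight. Starting from the identity \eqref{c-subhess}, for each $B$ I need a field $V_B$ along $\gamma$ with $V_B(0)=0$, $V_B(a)=e_B^{\perp}(a)$, and $V_B\perp\dot\gamma$. The natural candidate is
\[
V_B(t) = \phi(t)\,\tilde e_B(t), \qquad \tilde e_B(t) := e_B(t) - \langle e_B(t),\dot\gamma(t)\rangle \dot\gamma(t),
\]
with $\phi(0)=0$, $\phi(a)=1$; orthogonality to $\dot\gamma$ is built in, and the endpoint condition is satisfied since $\tilde e_B(a)=e_B^{\perp}(a)$. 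The Index Lemma then gives $\Delta_b r|_{\gamma(a)} \leq \sum_B I_a(V_B,V_B)$, reducing the problem to estimating one integral.

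The main computational step is to control $|\hat\nabla_{\dot\gamma}V_B|$ and the curvature term. Using Tanaka--Webster parallelism $\nabla_{\dot\gamma}e_B=0$ together with \eqref{c-connection} and $e_B\in HM$ (so $\theta(e_B)=0$), the correction reduces to
\[
\hat\nabla_{\dot\gamma}e_B = -\bigl(d\theta+A\bigr)(\dot\gamma,e_B)\,\xi + \theta(\dot\gamma)\,J e_B,
\]
whose norm is bounded by $C(m)\sqrt{1+k_1^2}$ because $|d\theta|$ and $|J|$ are universally controlled on the Webster-normalized bundle while $|A|\leq k_1$; the projection in $\tilde e_B$ only decreases this norm. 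For the curvature, the antisymmetry $\hat R(\dot\gamma,\dot\gamma)=0$ gives $\hat R(\tilde e_B,\dot\gamma)\dot\gamma = \hat R(e_B,\dot\gamma)\dot\gamma$, so
\[
\sum_{B=1}^{2m}\langle \hat R(\tilde e_B,\dot\gamma)\dot\gamma,\tilde e_B\rangle = \sum_{B=1}^{2m}\langle \hat R(e_B,\dot\gamma)\dot\gamma,e_B\rangle \geq -\hat k,
\]
directly from \eqref{c-lem1cur}. Expanding $|\hat\nabla_{\dot\gamma}V_B|^2$, applying Cauchy--Schwarz to the cross term $2\phi\phi'\langle\tilde e_B,\hat\nabla_{\dot\gamma}\tilde e_B\rangle$, summing over $B$, and using $\sum_B|\tilde e_B|^2 \leq 2m$ yields
\[
\sum_{B=1}^{2m} I_a(V_B,V_B) \leq \int_0^a \Bigl[4m(\phi')^2 + \bigl(\hat k + C(m)(1+k_1^2)\bigr)\phi^2\Bigr]\,dt.
\]

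To finish I will choose $\phi(t)=\sinh(\omega t)/\sinh(\omega a)$ with $4m\,\omega^2 = \hat k + C(m)(1+k_1^2)$; since $\phi''=\omega^2\phi$, integration by parts collapses the right-hand side to $4m\,\omega\coth(\omega a)$, and the elementary inequality $\coth x \leq 1 + 1/x$ then gives the target
\[
C_{\ref*{cst-1}}\Bigl(\tfrac{1}{a}+\sqrt{1+k_1+k_1^2+\hat k}\Bigr).
\]
The subtlest point is the computation of $\hat\nabla_{\dot\gamma}e_B$ from \eqref{c-connection}: one must verify that only $|A|$ itself (not any derivative of $A$) appears, that $\theta(e_B)=0$ kills the $\tau\otimes\theta$ contribution on the $Y$-slot, and that the surviving Reeb term $\theta(\dot\gamma)Je_B$ is universally bounded. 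With that bookkeeping done, the variational argument is clean and produces a constant depending only on $m$.
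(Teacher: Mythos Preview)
Your proposal is correct and follows essentially the same route as the paper: the paper also takes the Tanaka--Webster--parallel frame, projects each $e_B(t)$ orthogonally to $\dot\gamma$ to form $e_B'(t)$, uses the competitors $V_B=\dfrac{s_\kappa(t)}{s_\kappa(a)}\,e_B'(t)$ with $s_\kappa(t)=\kappa^{-1/2}\sinh(\sqrt{\kappa}t)$, computes $\hat\nabla_{\dot\gamma}e_B$ from \eqref{c-connection} to get $\sum_B|\hat\nabla_{\dot\gamma}e_B|^2\le 2m+2k_1+k_1^2$, applies the Index Lemma, and finishes with $\coth x\le 1+1/x$. The only cosmetic differences are that the paper tracks the explicit constant (hence the linear $k_1$ term) and handles the cross term via $|a+b|^2\le 2|a|^2+2|b|^2$ rather than your Cauchy--Schwarz on $2\phi\phi'\langle\tilde e_B,\hat\nabla_{\dot\gamma}\tilde e_B\rangle$.
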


\begin{proof}
Due to \eqref{c-connection}, we have
\begin{align*}
\hat{\nabla}_{\dot{\gamma}} e_B = - [ d \theta (\dot{\gamma}, e_B) + A (\dot{\gamma}, e_B) ] \xi + \theta(\dot{\gamma}) J e_B = - [ g_\theta (J \dot{\gamma}, e_B) + A (\dot{\gamma}, e_B) ] \xi + \theta(\dot{\gamma}) J e_B
\end{align*}
which implies that
\begin{align*}
\sum_{B=1}^{2m} \left| \hat{\nabla}_{\dot{\gamma}} e_B \right|^2 
& = 2m \left| \theta (\dot{\gamma}) \right|^2 + \sum_{B=1}^{2m} \left[ \left| g_\theta (J \dot{\gamma}, e_B) \right|^2 + 2 g_\theta (J \dot{\gamma}, e_B) A (\dot{\gamma}, e_B) + \left| A (\dot{\gamma}, e_B) \right|^2 \right] \\
& \leq 2m + 2 A (\dot{\gamma}, J \dot{\gamma}) + \sum_{B=1}^m \left| A (\dot{\gamma}, e_B) \right|^2 \leq 2m + 2 k_1 + k_1^2 .
\end{align*}
Set
\begin{gather*}
e_B' (t) = e_B (t) - \langle e_B (t), \nabla r \rangle \nabla r \perp \dot{\gamma} , \qquad
V_B (t) = \frac{s_{\kappa} (t) }{s_{\kappa} (a)} e_B' (t),
\end{gather*}
where
\begin{align*}
s_{\kappa} (t) = \frac{1}{\sqrt{\kappa}} \sinh (\sqrt{\kappa} t)  \qquad \mbox{and} \quad \kappa = \frac{1}{4m} (4m + 4 k_1 + 2 k_1^2 + \hat{k}).
\end{align*}
Hence $V_B (0) = 0$, $V_B (a) = e_B' (a), V_B \perp \dot{\gamma}$ and
\begin{align*}
\sum_{B=1}^{2m} \left| \hat{\nabla}_{\dot{\gamma}} V_B \right|^2 
& = \sum_{B=1}^{2m} \left| \frac{\dot{s}_{\kappa} (t) }{s_{\kappa} (a) } e_B' + \frac{s_{\kappa} (t) }{s_{\kappa} (a) } \hat{\nabla}_{\dot{\gamma}} e_B' \right|^2 \leq \frac{3}{2} \sum_{B=1}^{2m} \left| \frac{\dot{s}_{\kappa} (t) }{s_{\kappa} (a) } e_B' \right|^2 + 3 \sum_{B=1}^{2m} \left| \frac{s_{\kappa} (t) }{s_{\kappa} (a) } \hat{\nabla}_{\dot{\gamma}} e_B' \right|^2 \\
& \leq 4 m \left| \frac{\dot{s}_{\kappa} (t) }{s_{\kappa} (a) } \right|^2 + (4m + 4 k_1 + 2 k_1^2) \left| \frac{s_{\kappa} (t) }{s_{\kappa} (a) } \right|^2
\end{align*}
due to Cauchy inequality. 
By the curvature assumption, the Index lemma and \eqref{c-subhess}, we have
\begin{align*}
\Delta_b r \big|_{\gamma (a)} & \leq \sum_{B=1}^{2m} I_a (V_B, V_B) = \sum_{B=1}^{2m} \int_0^a \left( \big| \hat{\nabla}_{\dot{\gamma}} V_B \big|^2 - \langle \hat{R} (V_B, \nabla r) \nabla r , V_B \rangle \right) dt \\
& = \int_0^a \left( 4 m \left| \frac{\dot{s}_{\kappa} (t) }{s_{\kappa} (a) } \right|^2 + (4m  + 4 k_1 + 2 k_1^2 + \hat{k}) \left| \frac{s_{\kappa} (t) }{s_{\kappa} (a) } \right|^2 \right) dt \\
& \leq \frac{4m}{| s_{\kappa} (a) |^2} \int_0^a \left( | \dot{s}_{\kappa} (t) |^2 + \kappa | s_{\kappa} (t) |^2 \right) dt \\
& = 4 m \sqrt{\kappa} \: \coth \sqrt{\kappa} a \\
& \leq 4 m ( \frac{1}{a} + \sqrt{\kappa} )
\end{align*}
which finishes the proof.
\end{proof}

To prove Theorem \ref{c-thm-sub}, it suffices to demonstrate \eqref{c-lem1cur}.
It can be expressed by pseudo-Hermitian data due to the relationship between the Riemannian curvature tensor $\hat{R}$ and the curvature tensor $R$ associated with Tanaka-Webster connection $\nabla$ (cf. Theorem 1.6 in \cite{dragomir2006differential}):
\begin{align*}
\hat{R} (X, Y) Z = & R (X, Y) Z + ( LX \wedge LY ) Z  + 2 d \theta (X, Y) J Z  \\
& \quad  - g_\theta ( S (X, Y), Z ) \xi + \theta (Z ) S ( X, Y)  \\
& \quad - 2 g_\theta ( \theta \wedge \mathcal{O} (X, Y), Z) \xi  + 2 \theta ( Z) (\theta \wedge \mathcal{O}) ( X, Y ) \numberthis \label{c-2}
\end{align*}
where
\begin{align*}
S (X, Y) =& ( \nabla_X \tau ) Y - (\nabla_Y \tau) X \\
\mathcal{O} = & \tau^2 + 2 J \tau  - I \\
L = & \tau + J 
\end{align*}
Here $I$ is the identity, that is $I(X) = X$.
Note that the left side of \eqref{c-lem1cur} is independent of the choice of horizontal orthonormal frame of $\{e_B\}_{B=1}^{2m}$.
Let $\{e_B\}_{B=1}^{2m}$ be a local real orthonormal basis of $HM$ with $e_{\alpha+m} = J e_\alpha$ for $\alpha = 1, \dots m$. Denote $\eta_\alpha = \frac{1}{\sqrt{2}} (e_\alpha - i J e_\alpha)$. 

\begin{lemma} \label{c-riemric1}
For $X, Y \in TM$, we have
\begin{align}
\sum_{B=1}^{2m} \langle \hat{R} (e_B, X) Y,  e_B \rangle & = \sum_{B=1}^{2m} \langle R(e_B, X) Y, e_B \rangle - 3 \langle \pi_H X, \pi_H Y \rangle \notag \\
& \quad + \langle \tau X, \tau Y \rangle  + (2m - |\tau|^2) \theta (X) \theta (Y) + \mbox{\normalfont div } \tau (X) \theta (Y) \label{c-ric}
\end{align}
\end{lemma}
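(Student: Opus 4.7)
The plan is to substitute $X \to e_B$, $Y \to X$, $Z \to Y$ into the curvature identity \eqref{c-2}, pair the result with $e_B$, and sum over $B=1,\dots,2m$. The leading term automatically yields $\sum_B \langle R(e_B, X) Y, e_B\rangle$, so the problem reduces to evaluating the horizontal traces of the six correction terms. A first observation eliminates two of them: the terms carrying $\xi$ in the final slot, namely $-g_\theta(S(e_B, X), Y)\xi$ and $-2g_\theta((\theta \wedge \mathcal{O})(e_B, X), Y)\xi$, vanish after contraction with $e_B \in HM$.

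Three of the remaining four terms follow from short computations. For $\theta(Y)\, S(e_B, X)$ I would expand $S(e_B, X) = (\nabla_{e_B}\tau)X - (\nabla_X \tau)e_B$: the second piece traces to zero by tracelessness of $\tau$, while the self-adjointness of $\tau$ converts the first into $\theta(Y)\,\mbox{div}\,\tau(X)$. For $2\theta(Y)(\theta \wedge \mathcal{O})(e_B, X)$ only the summand containing $\theta(X) \mathcal{O}(e_B)$ survives (since $\theta(e_B)=0$), and I use $\sum_B \langle \mathcal{O} e_B, e_B\rangle = |\tau|^2 - 2m$, where the $J\tau$ contribution drops out because $\tau$ is symmetric on $HM$ while $J|_{HM}$ is antisymmetric. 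For $2\,d\theta(e_B, X)\, JY$, the identity $d\theta(e_B, X) = g_\theta(Je_B, \pi_H X)$ combined with completeness of $\{e_B\}$ in $HM$ delivers $-2\langle \pi_H X, \pi_H Y\rangle$.

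The wedge term $(Le_B \wedge LX) Y$ is the subtlest one. Here $L = \tau + J$ and $L\xi = 0$; expanding the wedge, the summand proportional to $\sum_B \langle Le_B, e_B\rangle$ dies by the same symmetry/antisymmetry argument (tracelessness of $\tau$, antisymmetry of $J|_{HM}$). The surviving summand reduces, after Parseval, to $\langle L^2 X, \pi_H Y\rangle$, and the anticommutation $\tau J + J\tau = 0$ extracted from \eqref{a-torsion} gives $L^2 = \tau^2 - I$ on $HM$, so this contribution is $\langle \tau X, \tau Y\rangle - \langle \pi_H X, \pi_H Y\rangle$. Adding the $d\theta$ piece above yields the $-3\langle \pi_H X, \pi_H Y\rangle + \langle \tau X, \tau Y\rangle$ contribution in the target formula. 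The main obstacle is purely bookkeeping: fixing the normalization conventions for the wedges $LX \wedge LY$ and $\theta \wedge \mathcal{O}$ and the sign of $d\theta$ consistently with those implicit in \eqref{c-2}; once aligned, the five contributing pieces assemble directly into \eqref{c-ric}.
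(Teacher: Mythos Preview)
Your proposal is correct and follows essentially the same route as the paper's proof: both substitute into \eqref{c-2}, discard the two $\xi$-valued terms against the horizontal $e_B$, and then evaluate the four surviving traces one by one, with the $d\theta$, $S$, and $\theta\wedge\mathcal{O}$ terms handled exactly as you describe. The only cosmetic difference is in the $(Le_B\wedge LX)Y$ term: the paper expands $\langle LX,\tau Y\rangle - \langle LX, JY\rangle$ directly and then uses $\tau J + J\tau = 0$, whereas you pass through the adjoint to write the same quantity as $\langle L^2 X,\pi_H Y\rangle$ and invoke $L^2 = \tau^2 - I$ on $HM$; these are the same computation in different notation.
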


\begin{proof}
By \eqref{c-2} and $e_B \in HM$, we have 
\begin{align*}
& \sum_{B=1}^{2m} \langle \hat{R}(e_B, X) Y, e_B \rangle \\
& =  \sum_{B=1}^{2m} \langle R(e_B, X) Y, e_B \rangle + \sum_{B=1}^{2m} \langle (L e_B \wedge L X) Y, e_B \rangle + \sum_{B=1}^{2m} 2 d \theta (e_B, X) \langle JY, e_B \rangle \\
& \quad + \sum_{B=1}^{2m} \theta(Y) \langle S(e_B,X) , e_B \rangle + \sum_{B=1}^{2m} 2 \theta(Y) \langle (\theta \wedge \mathcal{O}) (e_B, X), e_B \rangle \numberthis \label{c-ric1}
\end{align*}
Now we see each terms in the right side except the first one. Note that
\begin{align}
\sum_{B=1}^{2m} \langle (L e_B \wedge L X) Y, e_B \rangle = \sum_{B=1}^{2m} \langle L e_B , Y \rangle  \langle LX, e_B \rangle - \langle LX, Y \rangle \langle L e_B, e_B \rangle \label{c-4}
\end{align}
On one hand, since $LX$ is horizontal and 
\begin{align*}
\langle L e_B, Y \rangle  = \langle e_B, \tau Y \rangle  - \langle e_B, J Y \rangle,
\end{align*}
then we find
\begin{align*}
\sum_{B=1}^{2m} \langle L e_B , Y \rangle  \langle LX, e_B \rangle =& \langle LX , \tau Y \rangle - \langle LX, JY \rangle \\
= & \langle \tau X, \tau Y \rangle + \langle JX, \tau Y \rangle - \langle \tau X, J Y \rangle - \langle J X, JY \rangle \\
= & \langle \tau X, \tau Y \rangle - \langle \pi_H X, \pi_H Y \rangle. \numberthis \label{c-6}
\end{align*}
Here the last equation is due to $\tau J + J \tau =0$ by \eqref{a-torsion}.
On the other hand, 
\begin{align}
\langle L e_B, e_B \rangle = trace_{G_\theta} \tau + trace_{G_\theta} J = 0. \label{c-7}
\end{align}
Substituting \eqref{c-6} and \eqref{c-7} into \eqref{c-4}, the result is 
\begin{align}
\sum_{B=1}^{2m} \langle (L e_B \wedge L X) Y, e_B \rangle = \langle \tau X, \tau Y \rangle - \langle \pi_H X, \pi_H Y \rangle. \label{c-8}
\end{align}
For the third term in \eqref{c-ric1}, we have
\begin{align}
\sum_{B=1}^{2m} 2 d \theta (e_B, X) \langle JY, e_B \rangle = \sum_{B=1}^{2m} 2 \langle J e_B, X \rangle \langle JY, e_B \rangle  = -2 \langle \pi_H X, \pi_H Y \rangle. \label{c-9}
\end{align}
For the fourth term in \eqref{c-ric1}, by the formula of $S$, we have 
\begin{align}
\sum_{B=1}^{2m} \langle S(e_B, X), e_B \rangle = \sum_{B=1}^{2m} \langle (\nabla_{e_B} \tau) X, e_B \rangle - \sum_{B=1}^{2m} \langle (\nabla_X \tau) e_B, e_B \rangle = \mbox{div } \tau (X) \label{c-10}
\end{align}
since $\tau$ is traceless.
For the fifth term, by the definition of $\mathcal{O}$, we have 
\begin{align}
\sum_{B=1}^{2m} 2 \langle (\theta \wedge \mathcal{O}) (e_B, X), e_B \rangle = & \sum_{B=1}^{2m} - \langle \theta (X) \mathcal{O} (e_B) , e_B \rangle \nonumber \\ 
= & \sum_{B=1}^{2m} -\theta (X) \langle (\tau^2 + 2 J \tau - I ) (e_B) , e_B \rangle \nonumber \\
= & \theta (X) (2m -|\tau|^2) \label{c-11}
\end{align}
due to 
\begin{align*}
- \sum_{B=1}^{2m} \langle J \tau (e_B), e_B \rangle = \sum_{B=1}^{2m} \langle \tau J e_B , e_B \rangle = \sum_{\alpha=1}^{m} \langle \tau J e_\alpha, e_\alpha \rangle + \langle \tau J^2 e_\alpha, J e_\alpha \rangle = 0.
\end{align*}
By substituting \eqref{c-8}, \eqref{c-9}, \eqref{c-10} and \eqref{c-11} to \eqref{c-ric1}, we get \eqref{c-ric}.
\end{proof}

Tanaka \cite{tanaka1975differential} obtained the following version of first Bianchi identity of $R$:
\begin{align}
\mathcal{S} \left( R(X, Y) Z \right) = 2 \mathcal{S} \left( d \theta (X, Y) \tau (Z) \right).  \label{c-firstbianchi}
\end{align}
where $\mathcal{S}$ stands for the cyclic sum with respect to $X, Y, Z \in HM$. 
One can prove it by applying Riemannian first Bianchi identity to \eqref{c-2}.

\begin{lemma} \label{c-ricrelation}
For any $X, Y \in TM$, we have
\begin{align}
\langle R_* X, Y \rangle =& \sum_{B=1}^{2m} \langle R (e_B, \pi_H X) \pi_H Y, e_B \rangle - 2 (m-1) A( X, J Y), \label{c-pseric} 
\end{align}
\end{lemma}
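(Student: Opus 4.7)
The plan is to verify \eqref{c-pseric} by direct computation in a local unitary frame, after reducing the identity to horizontal arguments. First, both sides depend only on $\pi_H X$ and $\pi_H Y$: the left side because $R_* \xi = 0$, and the right side because $J \xi = 0$ together with $\tau \xi = 0$ collapse the torsion correction to $A(\pi_H X, J\pi_H Y)$. So I may assume $X, Y \in HM$ and extend both sides complex-bilinearly to $HM \otimes \mathbb{C}$. A short computation using $e_\alpha = \frac{1}{\sqrt{2}}(\eta_\alpha + \eta_{\ba})$ and $Je_\alpha = \frac{i}{\sqrt{2}}(\eta_\alpha - \eta_{\ba})$ rewrites the real trace as
\[
\sum_{B=1}^{2m} \la R(e_B, X) Y, e_B \ra = \sum_{\lambda=1}^m \bigl[ \la R(\eta_\lambda, X) Y, \eta_{\bl} \ra + \la R(\eta_{\bl}, X) Y, \eta_\lambda \ra \bigr],
\]
so it suffices to check the identity when $X$ and $Y$ are drawn from $\{\eta_\alpha, \eta_{\ba}\}$.

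Three key inputs drive the computation: (i) the first Bianchi identity \eqref{c-firstbianchi}, with the evaluations $d\theta(\eta_\alpha, \eta_{\bb}) = i \delta_{\alpha\beta}$ and $d\theta(\eta_\alpha, \eta_\beta) = 0$; (ii) Webster's Bianchi identity $R_{\ba\beta\lambda\bm} = R_{\ba\lambda\beta\bm}$ together with the explicit torsion formula $R_{\ba\beta\lambda\mu} = 2i(A_{\beta\mu}\delta_{\ba\lambda} - A_{\beta\lambda}\delta_{\ba\mu})$; and (iii) the fact that $\nabla$ preserves $\xi$, the horizontal distribution, and $J$, so that $R(X, Y)$ is a type-preserving endomorphism of $HM \otimes \mathbb{C}$; combined with the isotropy of $T_{1,0}M$ and $T_{0,1}M$ under the complex-bilinear extension of $g_\theta$, this annihilates many pairings automatically. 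In the mixed case $X = \eta_\alpha$, $Y = \eta_{\bb}$, the left side equals $R_{\alpha\bb}$ and the torsion correction vanishes, because $\tau$ swaps $T_{1,0}M$ with $T_{0,1}M$ and hence $A(\eta_\alpha, \eta_{\bb}) = 0$. Applying first Bianchi to the triples $(\eta_\lambda, \eta_\alpha, \eta_{\bb})$ and $(\eta_{\bl}, \eta_\alpha, \eta_{\bb})$ and using Webster's Bianchi to recognise $\sum_\lambda R_{\bl\lambda\alpha\bb} = R_{\alpha\bb}$, one obtains $\sum_\lambda \la R(\eta_\lambda, \eta_\alpha)\eta_{\bb}, \eta_{\bl}\ra = 0$ and $\sum_\lambda \la R(\eta_{\bl}, \eta_\alpha)\eta_{\bb}, \eta_\lambda\ra = R_{\alpha\bb}$. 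In the same-type case $X = \eta_\alpha$, $Y = \eta_\beta$, the left side is zero since $R_* \eta_\alpha \in T_{1,0}M$; the explicit torsion formula yields
\[
\sum_{\lambda=1}^m R_{\bl\beta\lambda\alpha} = 2i \sum_{\lambda=1}^m \bigl( A_{\beta\alpha}\delta_{\bl\lambda} - A_{\beta\lambda}\delta_{\bl\alpha} \bigr) = 2i(m-1) A_{\alpha\beta},
\]
while a further Bianchi applied to $(\eta_{\bl}, \eta_\alpha, \eta_\beta)$ combined with (iii) forces $\sum_\lambda \la R(\eta_{\bl}, \eta_\alpha)\eta_\beta, \eta_\lambda\ra = 0$. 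The torsion correction $-2(m-1) A(\eta_\alpha, J\eta_\beta) = -2i(m-1) A_{\alpha\beta}$ cancels this exactly. The case $X = \eta_{\ba}$, $Y = \eta_{\bb}$ follows by complex conjugation.

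The main obstacle is pinpointing the origin of the coefficient $2(m-1)$. As displayed above, it arises from the contraction of the two Kronecker deltas in the explicit torsion formula: the trace $\sum_\lambda \delta_{\bl\lambda} = m$ on the first term competes with the single Kronecker contraction $\sum_\lambda A_{\beta\lambda}\delta_{\bl\alpha} = A_{\alpha\beta}$ (using $A_{\beta\alpha} = A_{\alpha\beta}$) on the second. The remaining work is systematic bookkeeping over the four families of curvature components classified by the pattern of barred indices; each identification is elementary but must be handled with care, since the Tanaka-Webster connection generally lacks the Riemannian pair symmetry $R_{ABCD} = R_{CDAB}$.
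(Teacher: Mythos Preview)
Your proposal is correct, and the route is genuinely different from the paper's. The paper argues coordinate-free: it applies the first Bianchi identity \eqref{c-firstbianchi} once to the triple $(\eta_\alpha,\eta_{\bar\alpha},JX)$ for a general horizontal $X$, summed over $\alpha$, and then observes the key identity
\[
i\sum_{\alpha}\bigl[R(\eta_{\bar\alpha},JX)\eta_\alpha + R(JX,\eta_\alpha)\eta_{\bar\alpha}\bigr]
= -J\Bigl(\sum_{B=1}^{2m} R(JX,e_B)e_B\Bigr),
\]
which converts the cyclic remainder directly into the real trace appearing on the right of \eqref{c-pseric}; the result then follows after replacing $X,Y$ by $JX,JY$ and using $\langle R_* JX,JY\rangle=\langle R_*X,Y\rangle$. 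Your approach instead fixes a unitary frame and verifies \eqref{c-pseric} by exhausting the four type-combinations of $(X,Y)$, leaning on the explicit component formula $R_{\bar\alpha\beta\lambda\mu}=2i(A_{\beta\mu}\delta_{\bar\alpha\lambda}-A_{\beta\lambda}\delta_{\bar\alpha\mu})$ to handle the same-type case and on type-preservation plus isotropy of $T_{1,0}M,T_{0,1}M$ to kill several pairings outright. The paper's argument is shorter and avoids case analysis, but your approach has the advantage you note: it makes the origin of the coefficient $2(m-1)$ completely explicit via the Kronecker contraction. One minor remark: in your mixed case $(\eta_\alpha,\eta_{\bar\beta})$, the vanishing $\sum_\lambda\langle R(\eta_\lambda,\eta_\alpha)\eta_{\bar\beta},\eta_{\bar\lambda}\rangle=0$ follows immediately from your point (iii) alone, so invoking Bianchi there is unnecessary (though harmless).
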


\begin{proof}
Since $J X$ is horizontal, we can use the first Bianchi identity \eqref{c-firstbianchi} and obtain
\begin{align*} 
-i & \sum_{\alpha=1}^m R(\eta_\alpha, \eta_{\bar{\alpha}}) JX - i \sum_{\alpha=1}^m  R(\eta_{\bar{\alpha}},  JX) \eta_\alpha - i \sum_{\alpha=1}^m R(JX, \eta_\alpha) \eta_{\bar{\alpha}} \\
&= - i \sum_{\alpha=1}^m 2 d \theta (\eta_\alpha, \eta_{\bar{\alpha}}) \tau J X - i \sum_{\alpha=1}^m 2 d \theta (\eta_{\bar{\alpha}}, JX) \tau \eta_\alpha - i \sum_{\alpha=1}^m 2 d \theta (JX, \eta_\alpha) \tau \eta_{\bar{\alpha}}  \\
& = 2m \tau JX - 2 \sum_{\alpha=1}^m \tau J \bigg( \langle \eta_{\bar{\alpha}}, X \rangle \eta_\alpha + \langle \eta_\alpha, X \rangle \eta_{\bar{\alpha}} \bigg) \\
& = 2 (m-1) \tau JX. \numberthis \label{c-3}
\end{align*}
On the other hand, note that 
\begin{align*}
i \sum_{\alpha=1}^m  R(\eta_{\bar{\alpha}},  JX) \eta_\alpha + i \sum_{\alpha=1}^m R(JX, \eta_\alpha) \eta_{\bar{\alpha}} = & - i \sum_{\alpha=1}^m  R(JX, \eta_{\bar{\alpha}}) \eta_\alpha + i \sum_{\alpha=1}^m R(JX, \eta_\alpha) \eta_{\bar{\alpha}}  \\
= & - J \left( \sum_{\alpha=1}^m R(JX, \eta_{\bar{\alpha}}) \eta_\alpha + R(JX, \eta_\alpha) \eta_{\bar{\alpha}} \right) \\
= & - J \left( \sum_{B=1}^{2m} R (JX, e_B) e_B \right) \numberthis \label{c-1}
\end{align*}
Substituting \eqref{c-1} into \eqref{c-3}, we obtain 
\begin{align*}
\langle R_* X, Y \rangle =\sum_{B=1}^{2m} \langle R (e_B, J X) JY, e_B \rangle + 2 (m-1) A(J X, Y).
\end{align*}
By replacing $X, Y$ by $JX, JY$, the proof is finished.
\end{proof}

For any $Y \in HM$, using \eqref{c-2}, we have
\begin{align*}
\sum_{B=1}^{2m} \langle \hat{R} (e_B, \xi) Y, e_B \rangle = \sum_{B=1}^{2m} \langle R (e_B, \xi) Y, e_B \rangle
\end{align*}
and 
\begin{align*}
\sum_{B=1}^{2m} \langle \hat{R}(e_B, Y) \xi, e_B \rangle = \sum_{B=1}^{2m} \langle S(e_B, Y) ,e_B \rangle = \mbox{div } \tau (Y).
\end{align*}
Applying the symmetric property of Riemannian curvature, we get
\begin{align}
\sum_{B=1}^{2m} \langle R (e_B, \xi) Y, e_B \rangle = \mbox{div } \tau (Y). \label{c-5}
\end{align}
Combing Lemma \ref{c-riemric1}, Lemma \ref{c-ricrelation} and \eqref{c-5}, we obtain the following lemma.

\begin{lemma} \label{c-lem-ric-1}
For any $X, Y \in TM$, we have
\begin{align*}
\sum_{B=1}^{2m} \langle \hat{R} (e_B, X) Y, e_B \rangle 
& = \langle R_* X, Y \rangle + 2 (m-1) A(X, JY) + \langle \tau X, \tau Y \rangle - 3 \langle \pi_H X, \pi_H Y \rangle   \\
& \quad  + (2m - |\tau|^2) \theta (X) \theta (Y) + \mbox{\normalfont div } \tau (X) \theta (Y) + \mbox{\normalfont div } \tau (Y) \theta (X) \numberthis  \label{c-riemric2}
\end{align*}
\end{lemma}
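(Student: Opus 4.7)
The plan is to exploit bilinearity in $X, Y$ by writing $X = \pi_H X + \theta(X)\xi$ and $Y = \pi_H Y + \theta(Y)\xi$, so that
\[
\sum_{B=1}^{2m} \langle \hat{R}(e_B, X) Y, e_B \rangle = I_{HH} + \theta(X) I_{VH} + \theta(Y) I_{HV} + \theta(X)\theta(Y) I_{VV},
\]
where each $I_{??}$ denotes the corresponding sum with the arguments restricted to horizontal or Reeb parts. Each of the four blocks is treated separately and then reassembled.

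For the horizontal-horizontal block $I_{HH} = \sum_B \langle \hat{R}(e_B, \pi_H X)\pi_H Y, e_B \rangle$, I would apply Lemma \ref{c-riemric1}: since $\theta(\pi_H X) = \theta(\pi_H Y) = 0$, the last three terms of \eqref{c-ric} drop out. Then Lemma \ref{c-ricrelation} replaces the remaining $\sum_B \langle R(e_B, \pi_H X)\pi_H Y, e_B \rangle$ by $\langle R_* X, Y\rangle + 2(m-1) A(X, JY)$. The switches between $X$ and $\pi_H X$ here are harmless because $\tau\xi = 0$, $J\xi = 0$, and $A(\xi,\cdot) = 0$, so that $R_* X = R_* \pi_H X$, $\langle \tau X, \tau Y\rangle = \langle \tau \pi_H X, \tau \pi_H Y\rangle$, and $A(X, JY) = A(\pi_H X, J\pi_H Y)$. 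This block contributes the first four terms of \eqref{c-riemric2}.

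For the two mixed blocks $I_{HV}$ and $I_{VH}$, I would reuse the identities $\sum_B \langle \hat{R}(e_B, Z)\xi, e_B\rangle = \mbox{div}\,\tau(Z)$ and (via \eqref{c-5}) $\sum_B \langle \hat{R}(e_B, \xi) Z, e_B\rangle = \mbox{div}\,\tau(Z)$ for $Z \in HM$, both derived in the discussion leading to Lemma \ref{c-lem-ric-1}. Together with $\mbox{div}\,\tau(\xi) = 0$ (immediate from $\nabla\xi = 0$ and tracelessness of $\tau$), these produce the two cross terms $\mbox{div}\,\tau(X)\theta(Y)$ and $\mbox{div}\,\tau(Y)\theta(X)$.

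The vertical-vertical block $I_{VV} = \sum_B \langle \hat{R}(e_B, \xi)\xi, e_B\rangle$ is the crux. Plugging the formula entries $e_B, \xi, \xi$ into \eqref{c-2}, several pieces collapse at once: $(Le_B \wedge L\xi)\xi = 0$ because $L\xi = \tau\xi + J\xi = 0$; the $d\theta(e_B, \xi) J\xi$ term vanishes twice over; the Tanaka-Webster piece $\sum_B \langle R(e_B, \xi)\xi, e_B\rangle = 0$ because $\nabla\xi = 0$ forces $R(\cdot,\cdot)\xi = 0$; and the $S$-trace $\sum_B \langle S(e_B, \xi), e_B\rangle = -\sum_B \langle (\nabla_\xi \tau) e_B, e_B\rangle = -\nabla_\xi \mbox{tr}\,\tau = 0$ since $\tau$ is traceless. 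Only the $\theta \wedge \mathcal{O}$ contribution survives, evaluating to $-\sum_B \langle \mathcal{O}(e_B), e_B\rangle = 2m - |\tau|^2$ by the same $\mbox{tr}(J\tau) = 0$ identity already established in the proof of Lemma \ref{c-riemric1}. Adding the four blocks then delivers \eqref{c-riemric2}, and the $I_{VV}$ step is where the most careful bookkeeping is required, since it is the piece not directly supplied by the preceding lemmas.
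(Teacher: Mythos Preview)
Your proof is correct, and it uses the same ingredients as the paper (Lemmas \ref{c-riemric1} and \ref{c-ricrelation}, the identity \eqref{c-5}, and the relation \eqref{c-2}). The only organizational difference is where the horizontal/vertical split occurs: the paper applies Lemma \ref{c-riemric1} once for general $X,Y\in TM$, so the terms $(2m-|\tau|^2)\theta(X)\theta(Y)$ and $\mbox{div}\,\tau(X)\theta(Y)$ are already present, and then only the remaining Tanaka--Webster sum $\sum_B\langle R(e_B,X)Y,e_B\rangle$ is split (with the mixed piece handled by \eqref{c-5} and the $\xi$-pieces vanishing because $R(\cdot,\cdot)\xi=0$). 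You instead split the Riemannian sum $\sum_B\langle \hat R(e_B,X)Y,e_B\rangle$ first and then compute each block, which forces you to recompute $I_{VV}$ directly from \eqref{c-2}; that extra bookkeeping is correct but is precisely the part already absorbed into Lemma \ref{c-riemric1} in the paper's route.
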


Hence Theorem \ref{c-thm-sub} can be obtained by Lemma \ref{c-lem1} and Lemma \ref{c-lem-ric-1}.




\section{Horizontal Gradient Estimates} \label{sec-gradient}

Suppose that $(M^{2m+1}, \theta)$ is a complete noncompact pseudo-Hermitian manifold.
Let $r$ be the Riemannian distance function from $x_0 \in M$ associated with the Webster metric $g_\theta$ and $B_R$ be the geodesic ball of radius $R$ centered at $x_0$.
Assume that
\begin{align*}
R_* \geq - k, \mbox{ and } |A|, | \mbox{div} A | \leq k_1, \quad \mbox{ on } B_{2R}
\end{align*}
for some $R \geq 1$.
\constantnumber{cst-cutoff}
Choose a cut-off function $\varphi \in C^\infty ([0,\infty)) $ such that
\begin{align*}
\varphi \big|_{[0,1]} =1 , \quad  \varphi \big|_{[2, \infty)} =0,  \quad - C_{\ref*{cst-cutoff}}' | \varphi |^{\frac{1}{2}} \leq \varphi' \leq 0,
\end{align*}
where $C_{\ref*{cst-cutoff}}'$ is a universal constant.
By defining $ \chi (r) = \varphi (\frac{r}{R})$ and using Theorem \ref{c-thm-sub}, we find that
\begin{align}
\frac{|\nabla_b \chi|^2}{\chi} \leq \frac{C_{\ref*{cst-cutoff}}}{R^2} , \quad \Delta_b \chi \geq - \frac{C_{\ref*{cst-cutoff}}}{R}, \qquad \mbox{ on } B_{2 R} \setminus Cut (x_0), \label{d-cutoff}
\end{align}
where $C_{\ref*{cst-cutoff}} = C_{\ref*{cst-cutoff}} (m, k, k_1) $. 

Suppose that $(N, h)$ is a Riemannian manifold with sectional curvature 
\begin{align*}
K^N \leq \kappa
\end{align*}
for some $\kappa \geq 0$.
Denote the Riemannian distance function from $p_0 \in N$ by $\rho$.
Let $ B_D = B_D (p_0)$ be a regular ball of radius $D$ around $p_0$, that is $D < \frac{\pi}{2 \sqrt{\kappa}}$ and $B_D$ lies inside the cut locus of $p_0$ where $\frac{\pi}{2 \sqrt{\kappa}} = + \infty$ if $\kappa = 0$.
Set
\begin{align*}
\phi (t) = 
\begin{cases}
\frac{1- \cos (\sqrt{\kappa} t )}{\kappa}, & \kappa >0 \\
\frac{t^2}{2}, & \kappa =0
\end{cases}
.
\end{align*}
and 
\begin{align*}
\psi (q) = \phi \circ \rho (q).
\end{align*}
Obviously, $\phi$ is an increasing function and $\psi$ is at least $C^2$ in the cut locus of $p_0$.
Moreover, Hessian comparison theorem shows that
\begin{align}
\mbox{Hess } \psi \geq \cos ( \sqrt{\kappa} \rho) \cdot h. \label{d-comparison}
\end{align}

\begin{lemma} \label{d-lemma-nu-b}
For any $0 < D < \frac{\pi}{2 \sqrt{\kappa}}$, there exist $\nu \in [1,2)$, $b > \phi (D)$ and $\delta >0$ only depending on $D$ such that 
\begin{align}
\nu \frac{\cos (\sqrt{\kappa} t)}{b - \phi(t)} -2 \kappa > \delta, \quad  \forall t \in [0, D]
\end{align}
\end{lemma}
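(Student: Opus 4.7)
The plan is to exploit the identity $\cos(\sqrt{\kappa}t)=1-\kappa\phi(t)$ (valid for $\kappa>0$) to rewrite
\[
\nu\,\frac{\cos(\sqrt{\kappa}t)}{b-\phi(t)}-2\kappa \;=\; \frac{\nu(1-\kappa b)}{b-\phi(t)} + (\nu-2)\kappa,
\]
and then to reduce the uniform lower bound on $[0,D]$ to a single algebraic check at $t=0$ via monotonicity. The degenerate case $\kappa=0$ will be handled separately: there $\phi(t)=t^2/2$ and the quantity is simply $\nu/(b-t^2/2)$, which for any $b>D^2/2=\phi(D)$ is positive and increasing in $t$, so $\nu=1$ and $\delta:=1/(2b)$ work.

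For $\kappa>0$ the first step is to use the regular-ball hypothesis $D<\pi/(2\sqrt{\kappa})$ to deduce $\phi(D)<1/\kappa$, which creates slack to pick any
\[
b\in\bigl(\phi(D),\,1/\kappa\bigr).
\]
With such a $b$, one has $1-\kappa b>0$, and since $b-\phi(t)$ is positive and decreasing on $[0,D]$, the formula above shows that the left-hand side is a monotonically increasing function of $t$. Hence its minimum on $[0,D]$ is attained at $t=0$ and equals
\[
\frac{\nu(1-\kappa b)}{b}+(\nu-2)\kappa \;=\; \frac{\nu-2\kappa b}{b}.
\]

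The remaining task is to choose $\nu\in[1,2)$ so that this minimum is strictly positive. Since $2\kappa b<2$ by the choice of $b$, the interval $\bigl[\max\{1,2\kappa b\},\,2\bigr)$ is nonempty, and one may pick $\nu$ in it with $\nu>2\kappa b$ (strictly); setting
\[
\delta \;:=\; \tfrac{1}{2}\cdot\frac{\nu-2\kappa b}{b} \;>\;0
\]
gives the desired uniform lower bound, and the constants $\nu,b,\delta$ depend only on $D$ and on the ambient datum $\kappa$.

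The main obstacle, such as it is, is purely bookkeeping: one must realize the chain of strict inequalities $\phi(D)<b<\nu/(2\kappa)<1/\kappa$ with $\nu<2$. This is exactly what the regular-ball condition $D<\pi/(2\sqrt{\kappa})$ provides, via $\phi(D)<1/\kappa$. Once the monotonicity identity above is written down, there is no analytic subtlety left — the problem collapses to verifying a single inequality at $t=0$.
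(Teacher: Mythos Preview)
Your proof is correct and follows essentially the same approach as the paper: both arguments use the identity $\cos(\sqrt{\kappa}t)=1-\kappa\phi(t)$ to reduce the desired inequality to the existence of $\nu\in[1,2)$ and $b$ with $\phi(D)<b<\nu/(2\kappa)$, which is guaranteed by $\phi(D)<1/\kappa$. Your explicit monotonicity argument via the decomposition $\frac{\nu(1-\kappa b)}{b-\phi(t)}+(\nu-2)\kappa$ is a slightly more transparent presentation of what the paper packages as the infimum condition \eqref{d-10}, but the mathematical content is the same.
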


\begin{proof}
For the case $\kappa > 0$, it suffices to find $\nu \in [1,2)$ and $b > \phi (D)$ such that
\begin{align}
\phi (D) < b < \inf_{s \in [0, \phi (D)]} \left( \frac{\nu}{2 \kappa} + ( 1- \frac{\nu }{2}) s \right), \label{d-10}
\end{align}
which is obvious due to $\phi (D) < \frac{1}{\kappa}$.

The case $\kappa = 0$ is obvious by choosing $\nu =1$.
\end{proof}

Assume that $f: B_{2 R} (x_0) \subset M \to B_D (p_0) $ is a pseudo-harmonic map. By \eqref{d-comparison}, we have the following estimate: 

\begin{lemma} \label{d-lemma-comparison}
Let $\nu , b, \delta$ be given in Lemma \ref{d-lemma-nu-b}. Then
\begin{align}
\nu \frac{\Delta_b \psi \circ f}{b - \psi \circ f} - 2 \kappa |d_b f|^2 \geq \delta |d_b f|^2
\end{align}
\end{lemma}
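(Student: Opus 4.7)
The plan is to directly compute $\Delta_b(\psi \circ f)$ via the composition formula for the sub-Laplacian and then apply the pointwise estimate of Lemma \ref{d-lemma-nu-b} term by term. The key identity I would use is the chain rule
\begin{align*}
\Delta_b (\psi \circ f) = \mathrm{trace}_{G_\theta}\bigl[(\mathrm{Hess}\,\psi)(d_b f, d_b f)\bigr] + \langle d\psi, \tau_H(f)\rangle,
\end{align*}
which is the standard composition formula for the tension field applied to a function on the target. Since $f$ is pseudo-harmonic, $\tau_H(f) = 0$, so the second term vanishes and we obtain
\begin{align*}
\Delta_b(\psi \circ f) = \mathrm{trace}_{G_\theta}\bigl[(\mathrm{Hess}\,\psi)(d_b f, d_b f)\bigr].
\end{align*}

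Next I would plug in the Hessian comparison inequality \eqref{d-comparison}, namely $\mathrm{Hess}\,\psi \geq \cos(\sqrt{\kappa}\rho) \cdot h$, which is valid on the regular ball $B_D(p_0)$ (so that $\cos(\sqrt\kappa\rho)$ stays positive). Tracing this bound with respect to $G_\theta$ over an orthonormal horizontal frame yields
\begin{align*}
\Delta_b(\psi \circ f) \geq \cos(\sqrt{\kappa}\,\rho\circ f)\,|d_b f|^2.
\end{align*}
Dividing by the positive quantity $b - \psi\circ f > 0$ (positive because $b > \phi(D) \geq \psi\circ f$ on the image of $f$) and multiplying by $\nu > 0$ gives
\begin{align*}
\nu \frac{\Delta_b(\psi\circ f)}{b - \psi\circ f} \geq \nu \frac{\cos(\sqrt{\kappa}\,\rho\circ f)}{b - \phi\circ\rho\circ f}\,|d_b f|^2.
\end{align*}

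Finally, since $\rho\circ f$ takes values in $[0,D]$, I apply Lemma \ref{d-lemma-nu-b} pointwise with $t = \rho(f(x))$ to get
\begin{align*}
\nu \frac{\cos(\sqrt\kappa\, \rho\circ f)}{b-\phi\circ\rho\circ f} - 2\kappa \geq \delta,
\end{align*}
and multiplying through by $|d_b f|^2$ yields the claimed inequality. The only step that requires any care is justifying the chain-rule formula for $\Delta_b(\psi\circ f)$ together with the fact that the horizontal trace of $\mathrm{Hess}\,\psi$ on $TN$ pulled back along $d_b f$ is bounded below by $\cos(\sqrt\kappa\rho)|d_b f|^2$; this is routine once one expands $\mathrm{Hess}\,\psi(d_b f(e_B), d_b f(e_B))$ over an orthonormal horizontal frame $\{e_B\}_{B=1}^{2m}$ of $HM$ and uses the lower bound on $\mathrm{Hess}\,\psi$ as a quadratic form on $TN$. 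No other subtleties should arise.
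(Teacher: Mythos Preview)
Your proposal is correct and is precisely the argument the paper intends: the paper does not give a separate proof, only the sentence ``By \eqref{d-comparison}, we have the following estimate'' before stating the lemma, so your chain-rule computation $\Delta_b(\psi\circ f)=\mathrm{trace}_{G_\theta}[\mathrm{Hess}\,\psi(d_bf,d_bf)]+\langle d\psi,\tau_H(f)\rangle$ together with $\tau_H(f)=0$, the Hessian comparison \eqref{d-comparison}, and the pointwise bound of Lemma~\ref{d-lemma-nu-b} is exactly what is being left implicit.
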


To estimate $|d_b f|^2$, we consider the following auxiliary function
\begin{gather*}
\Phi_{\mu \chi} = |d_b f|^2 + \mu \chi |f_0|^2
\end{gather*}
where $\mu$ will be determined later.

\begin{lemma} \label{d-bochner-phi}
Suppose $\mu$ and $\epsilon$ satisfy
\begin{align*}
C_{\ref*{cst-reebcrbochner}} \mu \leq \epsilon \leq 1.
\end{align*}
If $\chi(x) \neq 0$ and $\Phi_{\mu \chi} (x) \neq 0$,
then at $x$, we have
\begin{align} \label{d-bochnerestimate}
\Delta_b \Phi_{\mu \chi} \geq & \frac{1- \epsilon}{2} \frac{|\nabla_b \Phi_{\mu \chi}|^2}{\Phi_{\mu \chi}}  - 2 \kappa |d_b f|^2 \Phi_{\mu \chi} \nonumber \\
& + \left(2m \epsilon - C_{\ref*{cst-reebcrbochner}} \mu \chi - 4 \epsilon^{-1} \mu \chi^{-1} |\nabla_b \chi|^2 + \mu \Delta_b \chi \right) |f_0|^2 \nonumber \\
& -  \left[C_{\ref*{cst-reebcrbochner}} + C_{\ref*{cst-reebcrbochner}} \mu \chi + 16 (\epsilon \mu \chi)^{-1} \right] |d_b f|^2 
\end{align}
\end{lemma}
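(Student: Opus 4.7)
The plan is to expand $\Delta_b \Phi_{\mu\chi}$ by the Leibniz rule, feed in the two Bochner inequalities of Lemma \ref{b-lem-estimate}, and then manufacture the quotient $\frac{|\nabla_b \Phi_{\mu\chi}|^2}{\Phi_{\mu\chi}}$ by a Cauchy--Schwarz and Kato argument. First I would write
\begin{align*}
\Delta_b \Phi_{\mu\chi} = \Delta_b |d_b f|^2 + \mu\chi\, \Delta_b |f_0|^2 + \mu(\Delta_b \chi) |f_0|^2 + 2\mu \langle \nabla_b \chi, \nabla_b |f_0|^2 \rangle,
\end{align*}
and apply \eqref{b-bochner1} with its free parameters set to $\epsilon$ and $\epsilon_1 = \epsilon \mu \chi$ (this choice is forced by the $16(\epsilon \mu \chi)^{-1}$ coefficient in the statement) and \eqref{b-bochner2} multiplied through by $\mu \chi$. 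Because $C_{\ref*{cst-reebcrbochner}} \mu \leq \epsilon$ and $\chi \leq 1$, the combined coefficient $(\epsilon - C_{\ref*{cst-reebcrbochner}} \mu \chi)$ in front of $|\pi_{(1,1)}^\perp \nabla_b d_b f|^2$ is nonnegative and can be dropped, and the quartic curvature terms fuse as $-2\kappa |d_b f|^4 - 2\kappa \mu \chi |f_0|^2 |d_b f|^2 = -2\kappa |d_b f|^2 \Phi_{\mu\chi}$. The upshot is a first lower bound
\begin{align*}
\Delta_b \Phi_{\mu\chi} \geq (2-\epsilon) |\nabla_b d_b f|^2 + (2-\epsilon) \mu \chi |\nabla_b f_0|^2 + 2\mu \langle \nabla_b \chi, \nabla_b |f_0|^2 \rangle + (\text{target non-gradient terms}),
\end{align*}
whose non-gradient part matches \eqref{d-bochnerestimate} except for the $-4\epsilon^{-1} \mu \chi^{-1} |\nabla_b \chi|^2 |f_0|^2$ piece, which must still be produced.

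Next I would synthesize the gradient quotient via Cauchy--Schwarz $\frac{|X+Y|^2}{a+b} \leq \frac{|X|^2}{a} + \frac{|Y|^2}{b}$ applied to $(X,a) = (\nabla_b |d_b f|^2, |d_b f|^2)$ and $(Y,b) = (\nabla_b(\mu\chi|f_0|^2), \mu\chi|f_0|^2)$, combined with the Kato inequality $|\nabla_b |d_b f|^2|^2 \leq 4|d_b f|^2 |\nabla_b d_b f|^2$ and the exact identity
\begin{align*}
\frac{|\nabla_b (\mu \chi |f_0|^2)|^2}{\mu \chi |f_0|^2} = \frac{\mu |\nabla_b \chi|^2 |f_0|^2}{\chi} + 2\mu \langle \nabla_b \chi, \nabla_b |f_0|^2 \rangle + \frac{\mu \chi |\nabla_b |f_0|^2|^2}{|f_0|^2}
\end{align*}
(obtained by squaring $\nabla_b(\mu\chi|f_0|^2) = \mu|f_0|^2 \nabla_b \chi + \mu\chi \nabla_b |f_0|^2$ and using Kato $|\nabla_b |f_0|^2|^2 \leq 4|f_0|^2 |\nabla_b f_0|^2$ only on the last summand). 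These yield
\begin{align*}
\tfrac{1-\epsilon}{2} \tfrac{|\nabla_b \Phi_{\mu\chi}|^2}{\Phi_{\mu\chi}} \leq 2(1-\epsilon) |\nabla_b d_b f|^2 + 2(1-\epsilon) \mu\chi |\nabla_b f_0|^2 + (1-\epsilon)\mu \langle \nabla_b \chi, \nabla_b |f_0|^2 \rangle + \tfrac{(1-\epsilon)\mu |\nabla_b \chi|^2 |f_0|^2}{2\chi}.
\end{align*}
Subtracting this upper bound from the Bochner-based lower bound leaves a surplus of $\epsilon |\nabla_b d_b f|^2 + \epsilon \mu\chi |\nabla_b f_0|^2 + (1+\epsilon)\mu \langle \nabla_b \chi, \nabla_b |f_0|^2 \rangle - \tfrac{(1-\epsilon)\mu|\nabla_b \chi|^2|f_0|^2}{2\chi}$ on top of the target non-gradient terms.

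Finally, Young's inequality with weight $\epsilon$ on the residual cross term gives
\begin{align*}
(1+\epsilon)\mu \langle \nabla_b \chi, \nabla_b |f_0|^2 \rangle \geq -\epsilon \mu \chi |\nabla_b f_0|^2 - \tfrac{(1+\epsilon)^2}{\epsilon} \tfrac{\mu |\nabla_b \chi|^2 |f_0|^2}{\chi},
\end{align*}
whose first piece exactly annihilates the leftover $\epsilon \mu \chi |\nabla_b f_0|^2$, while the total coefficient of $\mu \chi^{-1} |\nabla_b \chi|^2 |f_0|^2$ becomes $-\bigl(\tfrac{(1+\epsilon)^2}{\epsilon} + \tfrac{1-\epsilon}{2}\bigr)$. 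The elementary check $(\epsilon - 1)(\epsilon + 6) \leq 0$ for $\epsilon \in (0,1]$ rearranges to $\tfrac{(1+\epsilon)^2}{\epsilon} + \tfrac{1-\epsilon}{2} \leq \tfrac{4}{\epsilon}$, saturated at $\epsilon = 1$, which delivers the stated $-4\epsilon^{-1}$ coefficient; discarding the remaining nonnegative $\epsilon |\nabla_b d_b f|^2$ completes \eqref{d-bochnerestimate}. The main obstacle I expect is precisely this tight numerical bookkeeping: the parameter $\epsilon_1 = \epsilon\mu\chi$ and the Young weight $\epsilon$ are both forced by the statement's coefficients, and the upper bound $\epsilon \leq 1$ in the hypothesis is exactly what makes the crucial numerical comparison survive. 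Degenerate points where $|d_b f|$ or $|f_0|$ vanishes require a brief limiting argument to justify the divisions but are not the substantive difficulty.
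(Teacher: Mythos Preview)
Your proposal is correct and follows essentially the same route as the paper. The only cosmetic difference is in how the Kato step is packaged: the paper writes $\Phi_{\mu\chi} = |d_b f + \sqrt{\mu\chi}\, f_0 \otimes \theta|^2$ as the squared norm of a single tensor and applies Cauchy--Schwarz directly to $\nabla_b$ of that inner product, whereas you split via $\frac{|X+Y|^2}{a+b} \leq \frac{|X|^2}{a} + \frac{|Y|^2}{b}$ and then apply Kato to each summand. These two organizations produce literally the same upper bound on $\frac{|\nabla_b \Phi_{\mu\chi}|^2}{\Phi_{\mu\chi}}$, and from that point on the residual cross term, the Young step with weight $\epsilon$, and the final numerical comparison $\frac{(1+\epsilon)^2}{\epsilon} + \frac{1-\epsilon}{2} \leq 4\epsilon^{-1}$ are identical in both arguments (the paper bounds this via $\frac{1-\epsilon}{2} \leq \frac{1-\epsilon}{\epsilon}$ and $1+\epsilon \leq 2\epsilon^{-1}$ rather than your factorization $(\epsilon-1)(\epsilon+6)\leq 0$, but the content is the same).
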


\begin{proof}
Using \eqref{b-bochner1} and \eqref{b-bochner2} with $\epsilon_1 = \epsilon \mu \chi$, we have 
\begin{align*}
\Delta_b \Phi_{\mu \chi} =& \Delta_b (|d_b f|^2 + \mu \chi |f_0|^2) \\
\geq & (2- \epsilon) (|\nabla_b d_b f|^2 +  \mu \chi |\nabla_b f_0|^2) + 4 \mu \langle \nabla_b \chi \otimes f_0, \nabla_b f_0 \rangle -2 \kappa \Phi_{\mu \chi} |d_b f|^2   \\
&+ \left[ 2m \epsilon -C_{\ref*{cst-reebcrbochner}} \mu \chi + \mu \Delta_b \chi \right] |f_0|^2 - \left[C_{\ref*{cst-reebcrbochner}} + C_{\ref*{cst-reebcrbochner}} \mu \chi + 16 (\epsilon \mu \chi)^{-1} \right] |d_b f|^2 \numberthis \label{d-7}
\end{align*}
By Cauchy inequality, we have the following estimate
\begin{align*}
|\nabla_b \Phi_{\mu \chi}|^2 & = |\nabla_b (|d_b f|^2 + \mu \chi |f_0|^2) |^2 \\
& = | \nabla_b \langle d_b f + \sqrt{\mu \chi} f_0 \otimes \theta, d_b f + \sqrt{\mu \chi} f_0 \otimes \theta  \rangle |^2 \\
& = 4 \left| \left\langle d_b f + \sqrt{\mu \chi} f_0 \otimes \theta, \nabla_b d_b f + \sqrt{\mu \chi} \nabla_b f_0 \otimes \theta + \sqrt{\mu } \frac{\nabla_b \chi}{2 \sqrt{\chi}} \otimes f_0 \otimes \theta \right\rangle \right|^2 \\
& \leq 4 \big| d_b f + \sqrt{\mu \chi} f_0 \otimes \theta \big|^2  \cdot \left| \nabla_b d_b f + \sqrt{\mu \chi} \nabla_b f_0 \otimes \theta + \sqrt{\mu} \frac{\nabla_b \chi}{2 \sqrt{\chi}} \otimes f_0 \otimes \theta \right|^2 \\
&= 4 \Phi_{\mu \chi} \left(|\nabla_b d_b f|^2 + \mu \chi |\nabla_b f_0|^2 + \frac{\mu |\nabla_b \chi|^2 }{4 \chi} |f_0|^2 + \mu \langle \nabla_b f_0, \nabla_b \chi \otimes f_0 \rangle  \right) 
\end{align*}
which, using Cauchy inequality again, implies that 
\begin{align*}
& (2- \epsilon) (|\nabla_b d_b f|^2 +  \mu \chi |\nabla_b f_0|^2) + 4 \mu \langle \nabla_b \chi \otimes f_0, \nabla_b f_0 \rangle \\
& = (2- 2 \epsilon) \left( |\nabla_b d_b f|^2 +  \mu \chi |\nabla_b f_0|^2 \right) + \epsilon \mu \chi |\nabla_b f_0|^2 + 4 \mu \langle \nabla_b \chi \otimes f_0, \nabla_b f_0 \rangle \\
& \geq \frac{1-\epsilon}{2} \frac{|\nabla_b \Phi_{\mu \chi}|^2}{\Phi_{\mu \chi}} - \frac{1-\epsilon}{2} \frac{\mu |\nabla_b \chi|^2 }{\chi} |f_0|^2 + (2 + 2 \epsilon) \mu \langle \nabla_b \chi \otimes f_0, \nabla_b f_0 \rangle + \epsilon \mu \chi |\nabla_b f_0|^2  \\
& \geq \frac{1-\epsilon}{2} \frac{|\nabla_b \Phi_{\mu \chi}|^2}{\Phi_{\mu \chi}} - \left( \frac{1-\epsilon}{2} + \frac{(1+ \epsilon)^2}{\epsilon} \right) \mu \frac{ |\nabla_b \chi|^2 }{\chi} |f_0|^2  \\
& \geq \frac{1-\epsilon}{2} \frac{|\nabla_b \Phi_{\mu \chi}|^2}{\Phi_{\mu \chi}} - 4 \epsilon^{-1} \mu \frac{ |\nabla_b \chi|^2 }{\chi} |f_0|^2   \numberthis \label{d-6}
\end{align*}
due to $ \epsilon \leq 1$ and 
\begin{align*}
\frac{1-\epsilon}{2} + \frac{(1+ \epsilon)^2}{\epsilon} \leq \frac{1-\epsilon}{\epsilon} + \frac{(1+ \epsilon)^2}{\epsilon} = 2 \epsilon^{-1} + \epsilon + 1 \leq 4 \epsilon^{-1}.
\end{align*}
Submitting \eqref{d-6} to \eqref{d-7}, we finished the proof.
\end{proof}

\begin{proof}[Proof of Theorem \ref{d-thm-estimate}]
Set
\begin{align*}
F_{\mu \chi} = \frac{\Phi_{\mu \chi}}{(b - \psi \circ f)^\nu}
\end{align*}
where $\nu \in [1,2)$ and $b$ are determined in Lemma \ref{d-lemma-nu-b}.
The $\epsilon$ in Lemma \ref{d-bochner-phi} is chosen as
\begin{align}
\epsilon = \frac{1}{\nu} - \frac{1}{2} \leq 1 \label{d-epsilon}
\end{align}
and $\mu$ satisfy
\begin{align}
C_{\ref*{cst-reebcrbochner}} \mu \leq \epsilon .  \label{d-epsilon-mu}
\end{align}
Let $x$ be a maximum point of $\chi F_{\mu \chi}$ on $B_{2 R}$ which is nonzero. Assume that $r$ is smooth at $x$. Otherwise we can modify the distance function $r$ as \cite{cheng1980liouville}. Hence at $x$, we have
\begin{align}
0 = \nabla_b \ln (\chi F_{\mu \chi}) &= \frac{\nabla_b \chi}{\chi} + \frac{\nabla_b \Phi_{\mu \chi}}{\Phi_{\mu \chi}} + \nu \frac{\nabla_b (\psi \circ f)}{b - \psi \circ f} , \label{d-1} \\
0 \geq \Delta_b \ln (\chi F_{\mu \chi}) & = \frac{\Delta_b \chi}{\chi} - \frac{|\nabla_b \chi|^2}{\chi^2} + \frac{\Delta_b \Phi_{\mu \chi}}{\Phi_{\mu \chi}} - \frac{|\nabla_b \Phi_{\mu \chi}|^2}{\Phi_{\mu \chi}^2} \nonumber \\
& \quad + \nu \frac{\Delta_b (\psi \circ f)}{b - \psi \circ f} + \nu \frac{|\nabla_b (\psi \circ f)|^2}{(b - \psi \circ f)^2}. \label{d-2}
\end{align}
By \eqref{d-bochnerestimate}, \eqref{d-2} becomes
\begin{align*}
0 \geq & \frac{\Delta_b \chi}{\chi} - \frac{|\nabla_b \chi|^2}{\chi^2} - \frac{1 + \epsilon}{2} \frac{|\nabla_b \Phi_{\mu \chi}|^2}{\Phi_{\mu \chi}^2} - 2 \kappa |d_b f|^2 + \nu \frac{\Delta_b (\psi \circ f)}{b - \psi \circ f} + \nu \frac{|\nabla_b (\psi \circ f)|^2}{(b - \psi \circ f)^2}  \\
&+ \left( 2m \epsilon -C_{\ref*{cst-reebcrbochner}} \mu \chi + \mu \Delta_b \chi - 4 \epsilon^{-1} \mu \frac{ |\nabla_b \chi|^2 }{\chi} \right) \frac{|f_0|^2 }{\Phi_{\mu \chi}} - \left[C_{\ref*{cst-reebcrbochner}} + C_{\ref*{cst-reebcrbochner}} \mu \chi + 16 (\epsilon \mu \chi)^{-1} \right] \frac{|d_b f|^2}{\Phi_{\mu \chi}}. \numberthis \label{d-5}
\end{align*}
Using \eqref{d-1} and Cauchy inequality, we have at $x$
\begin{align}
- \frac{1+ \epsilon}{2} \frac{|\nabla_b \Phi_{\mu \chi}|^2}{\Phi_{\mu \chi}^2} 
\geq - \frac{1+ \epsilon}{2} (1 + \epsilon_2^{-1}) \frac{| \nabla_b \chi |^2}{\chi^2} -  \frac{1+ \epsilon}{2} (1+ \epsilon_2) \nu^2 \frac{|\nabla_b (\psi \circ f)|^2}{(b - \psi \circ f)^2}. \label{d-3}
\end{align}
Due to the choice \eqref{d-epsilon} of $\epsilon$, we can take
\begin{align*}
\epsilon_2 = \frac{2}{\nu (1+ \epsilon)} -1 = \frac{2-\nu}{2+\nu} >0
\end{align*}
and then
\begin{gather}
\frac{1+ \epsilon}{2} (1+ \epsilon_2) \nu^2 = \nu , \qquad
\frac{1+ \epsilon}{2} (1 + \epsilon_2^{-1}) = \frac{2+ \nu}{\nu(2-\nu)}.
\end{gather}
Substituting \eqref{d-3}, \eqref{d-comparison} to \eqref{d-5}, we have at $x$
\begin{align*}
0 \geq & \frac{\Delta_b \chi}{\chi} - \left( 1+ \frac{2+ \nu}{\nu(2-\nu)} \right) \frac{|\nabla_b \chi|^2}{\chi^2} + \nu \frac{\Delta_b \psi \circ f}{b - \psi \circ f} - 2 \kappa |d_b f|^2 \\
&+ \left( 2m \epsilon -C_{\ref*{cst-reebcrbochner}} \mu \chi + \mu \Delta_b \chi - 4 \epsilon^{-1} \mu \frac{ |\nabla_b \chi|^2 }{\chi} \right) \frac{|f_0|^2 }{\Phi_{\mu \chi}} - \left[C_{\ref*{cst-reebcrbochner}} + C_{\ref*{cst-reebcrbochner}} \mu \chi + 16 (\epsilon \mu \chi)^{-1} \right] \frac{|d_b f|^2}{\Phi_{\mu \chi}}.
\end{align*}
The estimates \eqref{d-cutoff} and Lemma \ref{d-lemma-comparison} yield that
\begin{align}
0 \geq - \frac{C_{\nu}}{\chi R}  + \delta |d_b f|^2 + \left( 2m \epsilon -C_{\ref*{cst-reebcrbochner}} \mu \chi - \frac{\mu C_{\nu}}{R} \right) \frac{|f_0|^2 }{\Phi_{\mu \chi}} - \left[C_{\ref*{cst-reebcrbochner}} + C_{\ref*{cst-reebcrbochner}} \mu \chi + 16 (\epsilon \mu \chi)^{-1} \right] \frac{|d_b f|^2}{\Phi_{\mu \chi}}, \label{d-11}
\end{align}
where $C_\nu = C_\nu ( \nu, C_{\ref*{cst-cutoff}} ) $ and $\delta$ is given by Lemma \ref{d-lemma-nu-b}.
By definition of $\Phi_{\mu \chi}$,
\begin{align*}
|f_0|^2  = \mu^{-1} \chi^{-1} (\Phi_{\mu \chi} - |d_b f|^2)
\end{align*}
which, together with \eqref{d-11}, shows at $x$,
\begin{align}
0 \geq \frac{1}{\chi} \left( 2m \epsilon \mu^{-1} -C_{\ref*{cst-reebcrbochner}} - \frac{ 2 C_{\nu}}{R} \right)  + \bigg[ \delta \chi \Phi_{\mu \chi} - 2m \epsilon \mu^{-1} - \left[C_{\ref*{cst-reebcrbochner}} + C_{\ref*{cst-reebcrbochner}} \mu + 16 (\epsilon \mu)^{-1} \right] \bigg] \frac{|d_b f|^2}{\chi \Phi_{\mu \chi}} \label{d-4}
\end{align}
To make the first bracket of the last line in \eqref{d-4} nonnegative, we can choose sufficiently small $\mu$ such that 
\begin{align*}
\epsilon \mu^{-1} = C_{\ref*{cst-reebcrbochner}}  + \frac{ 2 C_{\nu}}{R},
\end{align*}
which makes \eqref{d-epsilon-mu} right.
Hence 
\constantnumber{cst-bound}
\begin{align}
(\chi \Phi_{\mu \chi}) (x) \leq C_{\ref*{cst-bound}} \delta^{-1}  , \label{d-8}
\end{align}
where
\begin{align}
C_{\ref*{cst-bound}} = (2m+1) C_{\ref*{cst-reebcrbochner}} + \frac{4m C_\nu}{R} + \frac{C_{\ref*{cst-reebcrbochner}}}{2 C_{\ref*{cst-reebcrbochner}} + 4 C_\nu R^{-1}} + \frac{64 \nu^2}{(2-\nu)^2} \left( C_{\ref*{cst-reebcrbochner}} + \frac{2 C_\nu}{R} \right), \label{d-9}
\end{align}
which implies
\begin{align}
\max_{B_{2R} (x_0)} \chi F_{\mu \chi} \leq \frac{\chi \Phi_{\mu \chi}}{(b- \psi \circ f)^\nu} (x) \leq \frac{C_{\ref*{cst-bound}}}{\delta (b- \phi(D))^\nu }.
\end{align}
This shows that 
\begin{align}
\max_{B_R (x_0)} |d_b f|^2 \leq b^\nu \cdot \max_{B_R (x_0)} F_{\mu \chi} \leq  \frac{C_{\ref*{cst-bound}} b^{\nu}}{\delta (b- \phi (D))^{\nu}}.
\end{align}
Note that the constants $b, \nu $ and $ \delta$ depend on $\kappa$ and $D$ by Lemma \ref{d-lemma-nu-b}.
Hence the proof is finished by choosing a suitable constant $C_{\ref*{cst-3}}$.
\end{proof}





\section{Global Existence Theorem} \label{sec-existence}

Jost and Xu \cite{jost1998subelliptic} studied the minimizing sequence of Dirichlet problem of subelliptic harmonic maps and obtained the existence theorem under some convexity condition. 
Their results \cite{jost1998subelliptic} seem to depend on the global fields which satisfy the H\"ormander condition and the noncharacteristic assumption of the boundary. 
But the weak existence of Dirichlet problem and the interior continuity of weak solutions can be generalized to any sub-Riemannian manifolds with smooth boundaries, such as pseudo-Hermitian manifolds.
Hence Theorem 1 in \cite{jost1998subelliptic} can be generalized to pseudo-Hermitian manifolds with boundary as follows.

\begin{theorem} \label{e-thm-dirichlet}
Suppose that $(M,\theta)$ is a pseudo-Hermitian manifold with smooth boundary and $(N,h)$ is a Riemannian manifold with sectional curvature $K^N \leq \kappa$ for some $\kappa \geq 0$. Let $B_D = B_D (p_0) \subset N$ be a regular ball. If $\varphi \in S^2_1 (M, N)$ satisfies $\varphi (\overline{M}) \subset B_D (p_0)$, then there exists a weak pseudo-harmonic map $f \in C (M, N) \cap S^2_1 (M, N) $ with
\begin{align*}
f- \varphi \in S^2_{1, 0} (M, N)
\end{align*}
and 
\begin{align*}
f (\overline{M}) \subset B_D (p_0).
\end{align*}
\end{theorem}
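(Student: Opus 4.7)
The plan is to follow the minimizing sequence strategy of Jost-Xu, adapted to the Folland-Stein framework of Section~2. Embed $(N,h) \hookrightarrow \mathbb{R}^q$ isometrically via Nash so that $S^2_1(M,N) = \{u \in S^2_1(M,\mathbb{R}^q) : u(x) \in N\ \text{a.e.}\}$, and set
\[
\mathcal{A} = \{u \in S^2_1(M,N) : u - \varphi \in S^2_{1,0}(M,N),\ u(\overline{M}) \subset \overline{B_D(p_0)}\}.
\]
Then $\mathcal{A} \ni \varphi$ is nonempty, and any minimizing sequence $\{f_k\} \subset \mathcal{A}$ for $E_H$ is uniformly bounded in $S^2_1(M,\mathbb{R}^q)$ because $B_D$ is precompact. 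Reflexivity of $S^2_1$ combined with the pointwise convex constraint $u \in \overline{B_D}$ being preserved under weak convergence produces a weak subsequential limit $f \in \mathcal{A}$, and standard lower semicontinuity of $E_H$ under weak $S^2_1$-convergence makes $f$ an energy minimizer in $\mathcal{A}$.

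The next task is to identify $f$ as a weak pseudo-harmonic map whose image lies inside the open ball. Using that $\overline{B_D(p_0)}$ is geodesically convex and that its nearest-point projection $\pi \colon N \to \overline{B_D(p_0)}$ is $1$-Lipschitz, the Hildebrandt-Kaul-Widman trick applies: for every $V \in S^2_{1,0}(M, f^*TN)$ the family $f_t = \pi(\exp_f(tV))$ is admissible with $E_H(f_t) \leq E_H(\exp_f(tV))$, so the first variation of $E_H$ at $f$ vanishes and $f$ satisfies \eqref{a-local} weakly. Once this is known, the composition formula together with the Hessian comparison \eqref{d-comparison} gives
\[
\Delta_b(\psi \circ f) \geq \cos(\sqrt{\kappa}\rho(f))|d_b f|^2 \geq 0
\]
in the weak sense, where $\psi = \phi \circ \rho$. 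The subelliptic weak maximum principle then forces $\psi \circ f < \phi(D)$ in the interior, i.e.\ $f(\overline{M}) \subset B_D(p_0)$.

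Finally, interior continuity must be established before one can talk about a genuine pseudo-harmonic map. On any relatively compact coordinate patch $U \Subset M$ the horizontal distribution is framed by smooth vector fields satisfying the strong bracket-generating H\"ormander condition of Section~2, so the Campanato/Moser iteration of Jost-Xu applies locally to give $f \in C^{0,\alpha}_{\mathrm{loc}}(M,N)$. With $f$ H\"older continuous, Theorem~\ref{b-lem-regular} bootstraps $f$ through the subelliptic equation $\Delta_b f^i = -\Gamma^i_{jk}(f)\langle d_b f^j, d_b f^k\rangle$ to the desired regularity. The main obstacle is verifying that Jost-Xu's interior continuity argument, originally written on a domain of $\mathbb{R}^n$ with globally defined H\"ormander fields and a noncharacteristic boundary, is genuinely local in character so that it can be reorganised chart-by-chart on an abstract pseudo-Hermitian manifold; granting this localisation, the remainder of the proof is standard.
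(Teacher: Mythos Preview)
Your proposal is correct and follows essentially the same strategy as the paper: direct method of the calculus of variations in the Folland--Stein space, followed by the localised Jost--Xu interior continuity argument on coordinate patches. The packaging differs in two minor respects. First, the paper works entirely in geodesic normal coordinates on $B_D(p_0)$, treating maps as $\mathbb{R}^n$-valued functions with the Euclidean constraint $|f|\leq D$, whereas you use a Nash embedding into $\mathbb{R}^q$; the normal-coordinate approach is slightly more economical here since the target is already a ball. Second, the paper spells out the lower semicontinuity of $E_H$ in detail (the integrand $h_{ij}(f)\langle\nabla_b f^i,\nabla_b f^j\rangle$ depends on $f$, so this is not entirely standard) but then passes directly from constrained minimality to the weak Euler--Lagrange equation and to $f(\overline{M})\subset B_D$ without further comment; you instead cite lower semicontinuity but make the Hildebrandt--Kaul--Widman projection step and the sub-solution/maximum-principle step for the open-ball conclusion explicit. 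Each version fills in what the other leaves implicit, and the underlying argument is the same.
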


For completeness, the proof will been given in Appendix.

\begin{remark}
Note that $B_D (p_0)$ can be covered by a geodesic normal coordinate $\{ z^i \}$ and thus it can be viewed as an open set of $\mathbb{R}^n$ where $n = \mbox{dim } N$. Hence the notion
\begin{align*}
S^2_1 (M, N) = S^2_1 (M, \mathbb{R}^n),
\end{align*}
and $S^2_{1,0} (M, N)$ means the completion of all smooth $\mathbb{R}^n$-valued functions with compact support under $S^2_1$-norm. Moreover, the weak pseudo-harmonic map $f \in S^2_1 (M, N)$ means that the following equations hold in the distribution sense 
\begin{align}
\Delta_b f^i + \sum_{j, k} \Gamma^i_{jk} (f) \langle \nabla_b f^j, \nabla_b f^k \rangle = 0, \quad \mbox{for all $i = 1, 2, \dots n$,} \label{e-pse-har}
\end{align}
where $f^i = z^i \circ f$ and $\Gamma^i_{jk}$'s are Christoffel symbols of Levi-Civita connection in $(N, h)$.
\end{remark}

Since the Euler-Lagrange equations of pseudo-harmonic maps are quasilinear subelliptic systems, these weak solutions will be interior smooth by applying Theorem 1.1 in \cite{xu1997higher} to each coordinate neighborhood.

\begin{theorem} \label{e-thm-smooth}
Suppose that $(M, \theta)$ is a pseudo-Hermitian manifold (with or without boundary) and $(N,h)$ is a Riemannian manifold. Let $f : M \to N$ be a weak pseudo-harmonic map and $f \in S^2_1 (M , N)$. If $f$ is continuous inside $M$, then $f \in C^\infty (M, N)$.
\end{theorem}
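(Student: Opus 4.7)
The plan is to localize and then run a standard bootstrap argument based on two inputs: the higher regularity theorem of Xu for quasilinear subelliptic systems and the linear interior regularity result Theorem \ref{b-lem-regular}. Since smoothness is a local property, I will work on a relatively compact open set $\Omega \Subset M$ chosen small enough that $f(\overline{\Omega})$ lies inside a single normal coordinate chart of $(N,h)$. In those coordinates $f = (f^1,\dots,f^n)$ is an $\mathbb{R}^n$-valued function satisfying the quasilinear subelliptic system \eqref{e-pse-har} in the distribution sense, and it is enough to show smoothness of each $f^i$.

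The main obstacle, and the first step of the argument, is to promote the continuous weak solution to $C^{1,\alpha}_{\mathrm{loc}}$. The hypothesis $f \in S^2_1(M,N)$ furnishes $\nabla_b f \in L^2_{\mathrm{loc}}$, while the assumed continuity of $f$ makes each coefficient $\Gamma^i_{jk}(f)$ bounded and continuous on $\overline{\Omega}$. Thus \eqref{e-pse-har} is precisely a quasilinear subelliptic system with bounded continuous nonlinearity admitting a continuous weak solution, which is the framework of Theorem 1.1 in \cite{xu1997higher}. Applying that theorem chart by chart in $\Omega$ yields $f \in C^{1,\alpha}_{\mathrm{loc}}(\Omega,N)$ for some $\alpha \in (0,1)$, and in particular $\nabla_b f \in L^p_{\mathrm{loc}}$ for every $p \in (1,\infty)$.

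Once this first regularity gain is secured, the remainder is a routine bootstrap using Theorem \ref{b-lem-regular}. Rewrite \eqref{e-pse-har} as
\begin{equation*}
\Delta_b f^i = v^i, \qquad v^i := -\sum_{j,k} \Gamma^i_{jk}(f)\, \langle \nabla_b f^j, \nabla_b f^k\rangle .
\end{equation*}
Since $\Gamma^i_{jk}$ is smooth on $N$, and the Folland--Stein spaces $S^p_k$ are algebras stable under composition with smooth functions once $k$ is large enough (this follows from the embedding $S^p_k \subset L^p_{k/2}$ mentioned at the end of Section 2 and the corresponding classical Sobolev algebra property), every improvement $f \in S^p_{k+1,\mathrm{loc}}$ forces $v^i \in S^p_{k,\mathrm{loc}}$ for all $p \in (1,\infty)$ and $k \geq 1$. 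Starting from $f \in S^p_{1,\mathrm{loc}}$ with $v^i \in L^p_{\mathrm{loc}}$ and applying Theorem \ref{b-lem-regular} yields $f \in S^p_{2,\mathrm{loc}}$; this gives $v^i \in S^p_{1,\mathrm{loc}}$, whence $f \in S^p_{3,\mathrm{loc}}$, and so on. Iterating produces $f \in S^p_{k,\mathrm{loc}}$ for every $k \in \mathbb{N}$ and every $p > \dim M$, and the Folland--Stein embedding into $C^{r,\alpha}$ stated just after Theorem \ref{b-lem-regular} finally delivers $f \in C^{r,\alpha}_{\mathrm{loc}}$ for every $r$, proving $f \in C^\infty(M,N)$.
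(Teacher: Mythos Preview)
Your proposal is correct and follows the same essential route as the paper. The paper's own argument is a one-sentence appeal: it simply applies Theorem~1.1 of \cite{xu1997higher} in each coordinate neighborhood, noting that \eqref{e-pse-har} is a quasilinear subelliptic system, and concludes interior smoothness directly from that reference. You invoke the same theorem of Xu but extract only the initial $C^{1,\alpha}$ gain and then supply an explicit bootstrap via Theorem~\ref{b-lem-regular} and the Folland--Stein embeddings. This extra step is redundant, since Xu's higher-regularity result already contains the full iteration to $C^\infty$, but your version has the virtue of making the role of Theorem~\ref{b-lem-regular} and the $S^p_k \subset C^{r,\alpha}$ embedding transparent and keeps the argument more self-contained.
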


Now let's come to prove Theorem \ref{e-thm-exist}.
\begin{proof}[Proof of Theorem \ref{e-thm-exist}]
Suppose that $(M, \theta)$ is a complete noncompact pseudo-Hermitian manifold and $(N, h)$ is a Riemannian manifold with sectional curvature $K^N \leq \kappa$ for some $\kappa \geq 0$. Let $B_D (p_0) \subset N$ be a geodesic ball lying in the cut locus of $p_0$ and $D < \frac{\pi}{2 \sqrt{\kappa}}$. Assume that $\varphi : M \to B_D(p_0) $ with $\varphi (x_0) = p_0$.
We can choose a smooth exhaustion $\{ \Omega_i \}$ of $M$ such that $B_{2i} (x_0) \subset \Omega_i$.
Theorem \ref{e-thm-dirichlet} and Theorem \ref{e-thm-smooth} guarantee that there is a smooth pseudo-harmonic map $f_i : \Omega_i \to B_D (p_0)$.
One can find the constants $k(i)$ and $k_1 (i)$ such that
\begin{align}
R_* \big|_{B_{2i} (x_0)} \geq - k(i), \mbox{ and } \big|A |_{B_{2i} (x_0)} \big|, \big| \mbox{div } A |_{B_{2i} (x_0)} \big|  \leq k_1 (i).
\end{align}
Hence fixed $i$, for $j \geq i$, Theorem \ref{d-thm-estimate} controls the interior horizontal gradient of $f_j$ on $B_i (x_0)$: 
\constantnumber{cst-estimate}
\begin{align}
\max_{B_i (x_0)} |d_b f_j|^2  \leq C_{\ref*{cst-estimate}} ( i),
\end{align}
where $C_{\ref*{cst-estimate}} ( i)$ only depends on $k (i), k_1 (i), D, \kappa, i$. 
Arzel\`a-Ascoli theorem yields that by taking subsequence, $f_j$ will uniformly converge to some continuous map in $B_i (x_0)$ as $j \to \infty$.
By diagonalization, some subsequence of $\{ f_i \}$ will internally closed uniformly converge to a continuous map $f : M \to B_D (p_0)$ as $i \to \infty$.
Moreover, $f$ is a weak solution of \eqref{e-pse-har} and thus is smooth pseudo-harmonic by Theorem \ref{e-thm-smooth}.
\end{proof}


It is notable that the pseudo-harmonic map given by Theorem \ref{e-thm-exist} will depend on the initial map.  
By Theorem \ref{d-thm-liouville}, it is always trivial if the domain has nonnegative pseudo-Hermitian Ricci curvature. 
At the end of this paper, we will give a nontrivial example when the domain has negative pseudo-Hermitian Ricci curvature.
One model of Sasakian space form with constant negative pseudo-Hermitian sectional curvature is the Riemannian submersion
\begin{align*}
\pi : B^n_{\mathbb{C}} \times \mathbb{R} \to B^n_{\mathbb{C}}
\end{align*}
where $B^n_{\mathbb{C}} \subset \mathbb{C}^n $ is the complex ball with Bergman metric $\omega$ (cf. Example 7.3.22 in \cite{boyer2008sasakian}). 
Let $\omega_0$ be the canonical K\"ahler form on $\mathbb{C}^n$. 
Since the identity $I$ of $B^n_{\mathbb{C}}$ is a holomorphic map from $B^n_{\mathbb{C}}$ to $\mathbb{C}^n$, then it is also a harmonic map from $(B^n_{\mathbb{C}}, \omega)$ to $(\mathbb{C}^n, \omega_0)$.
The lift of $I$ is denoted by $\tilde{I}$ such that
\begin{align*}
\tilde{I} = I \circ \pi : B^n_{\mathbb{C}} \times \mathbb{R} \to \mathbb{C}^n.
\end{align*}
Then by the composition rule,
\begin{align*}
\hat{\nabla} d \tilde{I} = \hat{\nabla} d I (d \pi, d \pi) + d I (\hat{\nabla} d \pi) \numberthis \label{e-1}
\end{align*}
where the Levi-Civita connections of $(B^n_{\mathbb{C}}, \omega)$ and $(\mathbb{C}^n, \omega_0)$ are both denoted by $\hat{\nabla}$.
Suppose that $\nabla$ is the Tanaka-Webster connection of $B^n_{\mathbb{C}} \times \mathbb{R}$.
Their relation is given by (cf. Lemma 1.3 in \cite{dragomir2006differential})
\begin{align}
\hat{\nabla} = \nabla - d \theta \otimes \xi + 2 \theta \odot J \label{e-connection}
\end{align}
where $2 \theta \odot J = \theta \otimes J + J \otimes \theta$.
Assume that $\{ e_B \}_{B=1}^{2n}$ is a orthonormal frame in $(B^n_{\mathbb{C}}, \omega)$ with $e_{\alpha + n} = J e_\alpha$ for $1 \leq \alpha \leq n$ and $\tilde{e}_B$ is the horizontal lift of $e_B$. 
On one hand, the relation \eqref{e-connection} guarantees that
\begin{align*}
\tau_H (\tilde{I}) &= \sum_{B=1}^{2n} (\nabla_{\tilde{e}_B} d \tilde{I}) (\tilde{e}_B) \\
& = \sum_{B=1}^{2n} \hat{\nabla}_{\tilde{e}_B} \left( d \tilde{I} (\tilde{e}_B) \right) - \sum_{B=1}^{2n} d \tilde{I} \left( \nabla_{\tilde{e}_B} \tilde{e_B} \right) \\
& = \sum_{B=1}^{2n} \hat{\nabla}_{\tilde{e}_B} \left( d \tilde{I} (\tilde{e}_B) \right) - \sum_{B=1}^{2n} d \tilde{I} \left( \hat{\nabla}_{\tilde{e}_B} \tilde{e_B} \right) \\
& = \sum_{B=1}^{2n} (\hat{\nabla}_{\tilde{e}_B} d \tilde{I}) (\tilde{e}_B). \numberthis \label{e-2}
\end{align*}
On the other hand, by the relation of Levi-Civita connection and metric, we have
\begin{align*}
\sum_{B=1}^{2n} d \pi \left( \hat{\nabla}_{\tilde{e}_B} \tilde{e}_B \right) = \sum_{B=1}^{2n} \hat{\nabla}_{e_B} e_B
\end{align*}
which implies that
\begin{align*}
\sum_{i=1}^{2n} \left( \hat{\nabla}_{\tilde{e}_B} d \pi \right) (\tilde{e}_B)   = 0 . \numberthis \label{e-3}
\end{align*}
Taking the horizontal trace of \eqref{e-1} and using \eqref{e-2}, \eqref{e-3}, we obtain that
\begin{align*}
\tau_H (\tilde{I}) = \sum_{i=1}^{2n} \left( \hat{\nabla}_{e_B} d I \right) (e_B)  =0,
\end{align*}
since $I$ is harmonic.
Hence $\tilde{I}$ is nontrivial pseudo-harmonic.
But the image of $\tilde{I}$ is exactly the unit ball in $\mathbb{C}^n$ which is a regular ball.
So this is a nontrivial pseudo-harmonic example when the domain has negative pseudo-Hermitian Ricci curvature.

\section{Appendix}
This section will deduce Theorem \ref{b-lem-regular} and Theorem \ref{e-thm-dirichlet} by the theory of subelliptic analysis.
Suppose that $ (M, \theta) $ is a pseudo-Hermitian manifold of real dimension $2m +1$. 
Let $\Omega$ be a coordinate neighborhood in $M$ and $\{ e_B \}_{B=1}^{2m}$ be an orthonormal basis of $HM \big|_\Omega$ with $J e_i = e_{i +m}$ for $i = 1, 2, \dots, m$.
Since 
\begin{align*}
- \theta ([e_i, J e_i]) = d \theta (e_i, J e_i) = G_\theta (e_i, e_i) = 1, \quad \mbox{for } i = 1, 2, \dots, m,
\end{align*}
then each $[e_i, J e_i]$ is transversal with horizontal distribution which implies that $HM$ satisfies the strong bracket generating hypothesis.
Moreover, by identifying $\Omega$ with a domain in $\mathbb{R}^{2m+1}$, the vector fields $\{e_1, \dots, e_{2m} \}$ satisfy the H\"ormander's condition.
Let $e_B^*$ be the formal adjoint of $e_B$. 
For any $u \in C^\infty (\Omega)$, we have 
\begin{align*}
\Delta_b u = - \sum_{B =1}^{2m} e_B^* e_B u,
\end{align*}
which shows that the sub-Laplacian operator is subelliptic.
One can refer to Section 2.2 in \cite{dragomir2006differential} for more discussions.
Since Tanaka-Webster connection preserves the horizontal distribution, 
then the higher-order horizontal covariant derivative on $\Omega$ can be expressed as follows:
\begin{align*}
\nabla_b^l u (e_{B_1}, \cdots, e_{B_l}) 
& = \nabla_{e_{B_l}} \bigg[ \nabla^{l-1} u (e_{B_1}, \cdots, e_{B_{l-1}})  \bigg] - \sum_{i = 1}^{l} \nabla^{l-1} u (e_{B_1}, \cdots, \nabla_{e_{B_l}} e_{B_i}, \cdots, e_{B_l})  \\
& = \cdots \\
& = e_{B_l} e_{B_{l-1}} \cdots e_{B_1} u + \mbox{ lower order terms},
\end{align*}
for any $B_1, \cdots, B_l \in \{1, 2, \cdots , 2m\} $,
which implies that the $S^p_k$-norm on $\Omega$ is equivalent with the local Folland-Stein Sobolev norm (cf. Page 193 in \cite{dragomir2006differential}).
Hence local results of subelliptic analysis always hold for the sub-Laplacian operator on a coordinate neighborhood of pseudo-Hermitian manifolds.
By partition of unity, the domain can be generalized to a relatively compact domain in a pseudo-Hermitian manifold.
Let's use this idea to prove Theorem \ref{b-lem-regular} by the following local version.

\begin{theorem}[Theorem 3.17 in \cite{dragomir2006differential}, Theorem 16 in \cite{rothschild1976hypoelliptic}] \label{f-thm-local-1}
Suppose that $(M,\theta)$ is a pseudo-Hermitian manifold and $\Omega \Subset M$ is a coordinate neighborhood. Assume that $u, v \in L_{loc}^1 (\Omega)$ and $\Delta_b u = v$ in the distribution sense. For any $\chi \in C^\infty_0 (\Omega)$, if $v \in S^p_k (\Omega) $ with $p >1$ and $k \in \mathbb{N}$, then $\chi u \in S^p_{k+2} (\Omega)$ and 
\begin{align}
||\chi u||_{S^p_{k+2} (\Omega)} \leq C_{\chi} \left( ||u||_{L^p (\Omega)} + ||v||_{S^p_k (\Omega)} \right)
\end{align}
where $C_{\chi}$ only depends on $\chi$.
\end{theorem}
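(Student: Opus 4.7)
The plan is to reduce the assertion to the classical subelliptic regularity theorem of Rothschild--Stein for sum-of-squares operators satisfying H\"ormander's condition. First I would identify $\Omega$ with a bounded open subset of $\mathbb{R}^{2m+1}$ via the coordinate chart and choose an orthonormal horizontal frame $\{e_B\}_{B=1}^{2m}$ of $HM|_\Omega$ with $e_{i+m}=J e_i$. As remarked in the paper, since $-\theta([e_i,Je_i]) = d\theta(e_i,Je_i) = 1$ for each $i$, the family $\{e_1,\dots,e_{2m}\}$ satisfies H\"ormander's condition of step $2$, and locally
\begin{align*}
\Delta_b u = -\sum_{B=1}^{2m} e_B^* e_B u = \sum_{B=1}^{2m} e_B^2 u + (\text{first order in } e_B) u,
\end{align*}
so $-\Delta_b$ is a subelliptic operator of the Rothschild--Stein type with characteristic form $\sum_{B} (e_B^\sharp)^2$ on the cotangent bundle.

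Next I would invoke the basic interior subelliptic estimate: for any $\chi_0,\chi_1\in C_0^\infty(\Omega)$ with $\chi_1\equiv 1$ on a neighborhood of $\mathrm{supp}\,\chi_0$, and for $p>1$,
\begin{align*}
\|\chi_0 u\|_{S^p_2(\Omega)}\le C\bigl(\|\chi_1\Delta_b u\|_{L^p(\Omega)} + \|\chi_1 u\|_{L^p(\Omega)}\bigr).
\end{align*}
This is precisely the case $k=0$, which is the subelliptic version of the Calder\'on--Zygmund estimate proved in Folland--Stein and in full generality in Rothschild--Stein via lifting to a free nilpotent Lie group and comparison with the Kohn Laplacian. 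Because we showed in the paper that the intrinsic $S^p_k$-norm defined via $\nabla_b^l$ and the extrinsic Folland--Stein norm built from iterated $e_B$'s agree on $\Omega$ (modulo lower-order terms absorbed by the $L^p$-norm), this local estimate is exactly what we need.

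For general $k\in\mathbb{N}$, I would proceed by induction: given the result up to order $k-1$, pick a nested sequence of cut-offs $\chi=\chi_0\prec\chi_1\prec\cdots\prec\chi_{k+1}$ in $C_0^\infty(\Omega)$ and apply the basic estimate to the functions $e_{B_1}\cdots e_{B_j}u$ for $j\le k$. The key commutator computation is
\begin{align*}
\Delta_b(\chi_0 u) = \chi_0 v + [\Delta_b,\chi_0] u, \qquad [\Delta_b,\chi_0] = 2\sum_B (e_B\chi_0)e_B + (\Delta_b\chi_0),
\end{align*}
so commuting a horizontal derivative $e_B$ past $\Delta_b$ produces at worst one extra horizontal derivative of $u$, and Jacobi-type identities generated by the Tanaka--Webster torsion \eqref{a-torsion} show that each Reeb derivative $\nabla_\xi u$ that appears can be rewritten as a commutator of two horizontal derivatives, hence absorbed into $\nabla_b^{j+2}u$. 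Iterating this commutator scheme and using the induction hypothesis yields
\begin{align*}
\|\chi u\|_{S^p_{k+2}(\Omega)} \le C_\chi\bigl(\|u\|_{L^p(\Omega)} + \|v\|_{S^p_k(\Omega)}\bigr).
\end{align*}

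The main obstacle in this outline is the first step, the basic $L^p$ subelliptic estimate: proving it from scratch requires either Rothschild--Stein's lifting/approximation theorem (comparing the operator with a left-invariant Kohn sub-Laplacian on a free nilpotent Lie group and using the Christ--Geller singular integral theory there) or H\"ormander's pseudodifferential calculus adapted to a non-isotropic scaling. Everything else (commutator bookkeeping, induction on $k$, and the translation between $S^p_k$ and the classical Folland--Stein norms) is routine once the base estimate is in hand, which is why we would simply cite Theorem~3.17 of \cite{dragomir2006differential} and Theorem~16 of \cite{rothschild1976hypoelliptic} rather than reprove that foundational inequality.
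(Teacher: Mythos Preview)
The paper does not give its own proof of this statement: Theorem~\ref{f-thm-local-1} is stated as a citation from \cite{dragomir2006differential} and \cite{rothschild1976hypoelliptic} and is used as a black box in the subsequent proof of Theorem~\ref{b-lem-regular} (the global version) via partition of unity. So there is nothing in the paper to compare your proposal against.

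That said, your sketch is a faithful outline of how the cited result is actually established in the literature: identify $\Omega$ with a domain in $\mathbb{R}^{2m+1}$, observe that the horizontal frame satisfies H\"ormander's condition of step $2$, invoke the Rothschild--Stein $L^p$ estimate for the $k=0$ base case, and bootstrap to higher $k$ by commuting horizontal derivatives past $\Delta_b$ (with Reeb terms absorbed as second-order horizontal commutators). You also correctly flag that the only hard step is the base estimate, which requires the full Rothschild--Stein lifting machinery, and that this is precisely why one cites rather than reproves it. Your concluding sentence already matches the paper's stance exactly.
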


\begin{proof}[Proof of Theorem \ref{b-lem-regular}]
Let $\{\Omega_\alpha\}$ be a finite open cover of $ \operatorname{supp}  \chi$ and $\{\chi_\alpha\}$ be a partition of unity subordinating to $\{\Omega_\alpha\}$.
Since $\Delta_b u = v$ holds in each $\Omega_\alpha$, Theorem \ref{f-thm-local-1} guarantees that 
\begin{align*}
|| \chi_\alpha \chi u||_{S^p_{k+2} (\Omega_\alpha)} \leq C_{\chi_\alpha \chi} \left( ||u||_{L^p (\Omega_\alpha)} + ||v||_{S^p_k (\Omega_\alpha)} \right),
\end{align*}
which implies that
\begin{align*}
||\chi u||_{S^p_{k+2} (\Omega)} 
\leq \sum_\alpha ||\chi u||_{S^p_{k+2} (\Omega_\alpha)} 
\leq \left( \sum_\alpha C_{\chi_\alpha \chi} \right) \left( ||u||_{L^p (\Omega)} + ||v||_{S^p_k (\Omega)} \right).
\end{align*}
The proof is finished by setting $C_\chi = \sum_\alpha C_{\chi_\alpha \chi}$.
\end{proof}

Next let's prove Theorem \ref{e-thm-dirichlet}.

\begin{proof}[Proof of Theorem \ref{e-thm-dirichlet}]
Under the exponential map at $p_0 \in N$, the regular ball $B_D = B_D (p_0)$ is diffeomorphic to the ball $B_D$ with radius $D$ and centered at the origin in $\mathbb{R}^n$ where $n = \operatorname{dim} N$.
Let $\{z^i\}_{i=1}^n$ be the geodesic normal coordinates at $p_0$ and $f^i = z^i \circ f$ be the components of a function $f : M \to B_D$.
Denote
\begin{align*}
\mathcal{S} = \left\{ f \in S^2_1 (M, \mathbb{R}^n) \bigg| \: f - \varphi \in S^2_{1,0} (M, \mathbb{R}^n), \ \sup_M |f| \leq D \right\},
\end{align*}
where $|\cdot|$ is the Euclidean norm in $\mathbb{R}^n$.
Consider the minimizing problem
\begin{align}
  \lambda = \inf_{f \in \mathcal{S}} E_H (f) = \inf_{f \in \mathcal{S}} \int_M h_{ij} (f) \langle \nabla_b f^i, \nabla_b f^j \rangle \label{f-min}
\end{align}
where $ h_{ij} = h (\frac{\partial}{\partial z^i}, \frac{\partial}{\partial z^j}) $.
Since $ \varphi \in \mathcal{S} $, then $ \lambda $ is finite.
Let $ \{f_s\}_{s =1}^\infty $ be a minimizing sequence of \eqref{f-min} which have uniform $ S^2_1 $-norm bound.
By CR compact embedding theorem of Folland-Stein space (cf. Theorem 3.15 in \cite{dragomir2006differential}), there are a $ f \in S^2_1 (M , \mathbb{R}^n) $ and a subsequence of $ \{f_s\} $ (also denoted by $ \{f_s\} $) such that
\begin{enumerate}[(i)]
\item $ f_s \to f $ strongly in $ L^2 (M, \mathbb{R}^n) $; \label{f-f1}
\item $ f_s \rightharpoonup f $ weakly in $ S^2_1 (M, \mathbb{R}^n) $. \label{f-f2}
\end{enumerate}
By \eqref{f-f1}, $ f_s $ converges to $ f $ almost everywhere on $ M $ which implies that $ |f| \leq D $; 
by \eqref{f-f2}, $ f - \varphi \in S^2_{1,0} (M, \mathbb{R}^n) $ which is closed in $ S^2_1 (M, \mathbb{R}^n) $. 
Hence $ f \in \mathcal{S} $. 

We claim that
\begin{align}
E_H (f) \leq \liminf_{s \to \infty} E_H (f_s). \label{f-2}
\end{align}
It suffices to show that for any domain $ \Omega \subset M $ with an orthonormal basis $ \{e_A\}_{A = 1}^{2m} $ of $ HM \big|_{\Omega} $,
\begin{align}
\sum_{i, j, A} \int_\Omega h_{ij} (f) e_A f^i \: e_A f^j 
\leq 
\liminf_{s \to \infty} \sum_{i, j, A} \int_\Omega h_{ij} (f_s) e_A f^i_s \: e_A f^j_s.  \label{f-1}
\end{align}
For any $ \varepsilon > 0 $, since $f^i \in S^2_1 (\Omega)$ and $ f_s^i \to f^i $ strongly in $ L^2 (\Omega) $, there is a compact set $ K \subset \Omega $ such that
\begin{align*}
\sum_{i, j, A} \int_{\Omega \setminus K} h_{ij} (f) e_A f^i \: e_A f^j < \varepsilon 
\quad \mbox{and } \quad
f^i_s \rightrightarrows f^i  \quad \mbox{on } K,
\end{align*}
where ``$\rightrightarrows$'' means ``uniform convergence''.
The positivity of $ (h_{ij}) $ implies that
\begin{align*}
0 
& \leq \sum_{i, j, A} h_{ij} (f_s)  e_A (f^i_s - f^i) \: e_A (f^j_s - f^j)  \\
& = \sum_{i, j, A} h_{ij} (f_s)  e_A f^i_s \: e_A f^j_s 
- \sum_{i, j, A} h_{ij} (f_s)  e_A f^i \: e_A f^j 
- 2 \sum_{i, j, A} h_{ij} (f_s)  e_A f^i \: e_A (f^j_s - f^j) ,
\end{align*}
which yields that
\begin{align*}
\sum_{i, j, A} \int_K h_{ij} (f_s) e_A f^i_s \: & e_A f^j_s
 \geq  \sum_{i, j, A} \int_K h_{ij} (f_s) e_A f^i \: e_A f^j
+ 2 \sum_{i, j, A} \int_K h_{ij} (f_s) e_A f^i \: e_A (f^j_s - f^j) \\
& = \sum_{i, j, A} \int_K h_{ij} (f_s) e_A f^i \: e_A f^j
+
2 \sum_{i, j, A} \int_K (h_{ij} (f_s) - h_{ij} (f)) e_A f^i \: e_A (f^j_s - f^j) \\
& \quad + 2 \sum_{i, j, A} \int_K h_{ij} (f) e_A f^i \: e_A (f^j_s- f^j). \numberthis \label{f-3}
\end{align*}
For the first term of \eqref{f-3}, since $f_s^i \rightrightarrows f^i$ on $K$, then by mean value theorem, we have
\begin{align*}
\left| \sum_{i, j, A} \int_K (h_{ij} (f_s)- h_{ij} (f)) e_A f^i \: e_A f^j \right| \leq \sum_{i, j, k, A} \max_{B_D} \left| \frac{\partial h_{ij}}{\partial z^k} \right|  \int_K |f_s^k - f^k| \: |e_A f^i| \: |e_A f^j| \to 0,
\end{align*}
as $s \to \infty$, which implies that
\begin{align}
\lim_{s\to \infty} \sum_{i, j, A} \int_K h_{ij} (f_s) e_A f^i \: e_A f^j = \sum_{i, j, A} \int_K h_{ij} (f) e_A f^i \: e_A f^j. \label{f-4}
\end{align}
Similarly, since $ e_A f^j_s $ and $ e_A f^j $ are uniformly bounded in $ L^2 (K) $, then
\begin{align}
\lim_{s \to \infty} \sum_{i, j, A} \int_K (h_{ij} (f_s) - h_{ij} (f)) e_A f^i \: e_A (f^j_s - f^j) = 0. \label{f-5}
\end{align}
For the third term of \eqref{f-3}, define an operator $ T_A : S^2_1 (M) \to L^2 (K) $ by 
\begin{align*}
T_A (u) = e_A u \big|_K.
\end{align*}
$T_A$ is continuous due to the following calculation:
\begin{align*}
|| T_A (u) ||_{L^2 (K)}^2 = \int_K | e_A u |^2 \leq \int_M | \nabla_b u |^2 \leq || u ||_{S^2_1(M)}^2.
\end{align*}
Since any continuous operator between two Banach spaces preserves weak convergence, then $ e_A f^i_s \rightharpoonup e_A f^i $ weakly in $ L^2 (K) $ for any $ A $ and $ i $.
Hence 
\begin{align}
\lim_{s \to \infty} \sum_{i, j, A} \int_K h_{ij} (f) e_A f^i \: e_A (f^j_s- f^j) = 0 . \label{f-6}
\end{align} 
Using \eqref{f-4}, \eqref{f-5} and \eqref{f-6}, we find that
\begin{align*}
\sum_{i, j, A} \int_K h_{ij} (f) e_A f^i \: e_A f^j \leq \liminf_{s \to \infty} \sum_{i, j, A} \int_K h_{ij} (f_s) e_A f^i_s \: e_A f^j_s,
\end{align*}
which implies that
\begin{align*}
\sum_{i, j, A} \int_\Omega h_{ij} (f) e_A f^i \: e_A f^j
& \leq \sum_{i, j, A} \int_K h_{ij} (f) e_A f^i \: e_A f^j + \varepsilon \\
& \leq \liminf_{s \to \infty} \sum_{i, j, A} \int_K h_{ij} (f_s) e_A f^i_s \: e_A f^j_s + \varepsilon \\
& \leq \liminf_{s \to \infty} \sum_{i, j, A} \int_\Omega h_{ij} (f_s) e_A f^i_s \: e_A f^j_s + \varepsilon.
\end{align*}
By taking $\varepsilon \to 0$, we obtain \eqref{f-1} and thus $E_H (f) \leq \lambda$.

Obviously, $ E_H (f) \geq \lambda $ and then $ E_H (f) = \lambda $ which shows that $ f $ has the minimal horizontal energy in $ \mathcal{S} $ and satisfies
\begin{align*}
\Delta_b f^i + \Gamma^i_{jk} (f) \langle \nabla_b f^j, \nabla_b f^k \rangle = 0 ,
\end{align*}
in the distribution sense.
By applying Theorem 2 in \cite{jost1998subelliptic} to $ f $ on each coordinate neighborhood $ \Omega \Subset M $, we obtain the interior continuity of $ f $.
\end{proof}

\section*{Acknowledge}
The authors would like to thank the referees for their valuable comments.

\bibliographystyle{plain}

\bibliography{finalref}

Tian Chong

\emph{School of Science, College of Arts and Sciences}

\emph{Shanghai Polytechnic University}

\emph{Shanghai, 201209, P. R. China}

chongtian@sspu.edu.cn

\vspace{12 pt}

Yuxin Dong

\emph{School of Mathematical Sciences}

\emph{Fudan University}

\emph{Shanghai, 200433, P. R. China}

yxdong@fudan.edu.cn

\vspace{12 pt}

Yibin Ren

\emph{College of Mathematics, Physics and Information Engineering}

\emph{Zhejiang Normal University}

\emph{Jinhua, 321004, Zhejiang, P.R. China}

allenryb@outlook.com

\vspace{12 pt}

Wei Zhang

\emph{School of Mathematics}

\emph{South China University of Technology}

\emph{Guangzhou, 510641, P.R. China}

sczhangw@scut.edu.cn

\end{document}